\documentclass[12pt]{amsart}
\usepackage{amssymb, amsthm, epsfig, verbatim, amscd, enumerate, stmaryrd, doublespace,
amsmath}%, pdfsync}

\textwidth=15.7cm
\textheight=22cm
\hoffset=-1.7cm
\voffset=-1cm

%%%%%%%%%%%%%%

\newtheorem{thm}{Theorem}[section]
\newtheorem{cor}[thm]{Corollary}
\newtheorem{lem}[thm]{Lemma}
\newtheorem{prop}[thm]{Proposition}

\theoremstyle{definition}
\newtheorem{defn}[thm]{Definition}

\newtheorem{rem}[thm]{Remark}

\theoremstyle{remark}

\numberwithin{equation}{section}

\mathsurround=1pt
\setlength{\parindent}{2em}
\setlength{\parskip}{0ex}

%
%   MACROS
%

%
%  Greek letters
%

\newcommand{\al}{\alpha}
\newcommand{\be}{\beta}
\newcommand{\ga}{\gamma}

%
%    Various operators
%

\newcommand{\x}{\times}

\newcommand{\Z}{\mathbb Z}

\newcommand{\R}{\mathbb R}

\newcommand{\co}{\colon\thinspace}

%
%   End macros
% 

%\setcounter{tocdepth}{3}

\begin{document}
\mathsurround=1pt 
\title[Double  curves of immersed spheres]
{Interlacement of double curves of immersed  spheres}

\keywords{Multiple points of immersions, interlacement graph, local complementation}%, ribbon $2$-knot}

\thanks{2010 {\it Mathematics Subject Classification}.
Primary 57M15; Secondary  57R42, 57Q45.}

\author{Boldizs\'ar Kalm\'ar}
\address{Alfr\'ed R\'enyi Institute of Mathematics,
Hungarian Academy of Sciences \newline
Re\'altanoda u. 13-15, 1053 Budapest, Hungary}
\email{kalmar.boldizsar@renyi.mta.hu}

\thanks{\today}

\begin{abstract}
We characterize those unions of 
embedded disjoint circles in the sphere $S^2$ which can be the multiple point set of a generic immersion of $S^2$ into $\R^3$
in terms of the  %directed 
interlacement  of the given circles. % with incidence matrix over $\Z_2$.
Our result  is the one higher dimensional analogue of Rosenstiehl's characterization of words being Gauss codes of self-crossing plane curves.
 %immersed circles into $\R^2$.
Our proof uses a result of Lippner \cite{Li04} and we further 
generalize the ideas of Fraysseix and Ossona de Mendez \cite{FO99}, 
which leads us to directed interlacement graphs of paired trees and their %oriented 
local complementation.
%We characterize those unions of embedded disjoint circles in the 2-sphere in terms of the interlacement of the given circles, which can be the multiple point set of a generic immersion of the 2-sphere into the 3-dimensional space.
%Our result  is the one higher dimensional analogue of Rosenstiehl's characterization of words being Gauss codes of self-crossing plane curves. Our proof uses a result of Lippner and we further generalize the ideas of Fraysseix and Ossona de Mendez, which leads us to directed interlacement graphs of paired trees and their local complementation.
\end{abstract}

\maketitle

\begin{spacing}{1.3}

\section{Introduction}

%Ha adva van egy k\"or\"on p\'aros sok pont \'es azoknak egy p\'aros\'{\i}t\'asa, akkor  a probl\'em\'at, hogy mikor l\'etezik olyan immerzi\'oja a k\"ornek a s\'ikba, ahol pont ezek a pontp\'arok a kett\H{o}spontok, Gauss sz\'oprobl\'em\'anak nevezz\"uk. Sok ezzel kapcsolatos eredm\'eny mellett Fraysseix \'es Ossona de Mendez adtak egy jellemz\'est a megfelel\H{o} pontp\'arokra. Eggyel magasabb dimenzi\'os, ezzel anal\'og  probl\'ema a g\"omb t\'erbe val\'o immerzi\'oival kapcsolatos: ha adott p\'aros sok be\'agyazott k\"or a g\"omb\"on \'es azok egy p\'aros\'{\i}t\'asa, mikor l\'etezik olyan immerzi\'oja a g\"ombnek, ahol pontosan ezek a k\"orp\'arok a kett\H{o}sg\"orb\'ek. Erre a probl\'em\'ara Lippner adott egy megold\'ast  Fraysseix \'es Ossona de Mendez megk\"ozel\'{\i}t\'es\'et alkalmazva. 
%Rosenstiehl-nek is van egy megold\'asa a Gauss sz\'oprobl\'em\'ara, a kett\H{o}spontok linking gr\'afj\'at jellemzi. A  Fraysseix \'es Ossona de Mendez f\'ele megk\"ozel\'{\i}t\'es j\'ol haszn\'alhat\'o Rosenstiehl eredm\'eny\'enek a bizony\'{\i}t\'as\'ara is.
%Ennek a Rosenstiehl f\'ele jellemz\'esnek az anal\'ogj\'at mutattam meg g\"omb\"ok immerzi\'oi eset\'en: a g\"ombbe \'agyazott k\"or\"ok linking gr\'afja seg\'{\i}ts\'eg\'evel le\'{\i}rtam, hogy mikor l\'etezik a g\"ombnek olyan immerzi\'oja a t\'erbe, amikor az adott k\"or\"ok a kett\H{o}sg\"orb\'ek. A bizonyit\'as Lippner eredm\'eny\'enek seg\'{\i}ts\'eg\'evel t\"ort\'ent, hasonl\'oan ahhoz, ahogy  Fraysseix \'es Ossona de Mendez bizony\'{\i}totta a Rosenstiehl f\'ele jellemz\'est a Gauss sz\'oprobl\'ema eset\'eben.

Our goal is to understand the multiple point sets of ``nicely'' mapped spheres into $3$-dimensional Euclidean space.
%The $3$-dimensional part of this topological problem was reduced into $2$-dimension by \cite{Li04}.
As explained in this section below, our task in this paper will be to manipulate certain interlaced systems of smoothly embedded circles in $S^2$. 
Although our main results are  topological-combinatorial  (see Definition~\ref{circlerel}, Remark~\ref{interremark} (1)  and Theorem~\ref{mainthm}), we
take a combinatorial-algebraic viewpoint during the proof in order to make some
analogies with \cite{FO99} transparent.
%\footnote{We chose this way because we wanted to make these results more accessible to readers interested in either  combinatory or topology.}.
We hope readers interested in either  combinatory or topology will benefit from this approach.
%In the paper we are going to  manipulate graphs which represent immersed spheres in $\R^3$.
To get motivated, we list two problems: first a topological  and then a combinatorial one.

%Although our results are related very strongly to topology -- in fact our main motivation is to understand 
%the multiple point sets of ``nicely'' mapped spheres into $3$-dimensional Euclidean space,
%all the proofs and other essential content in this paper are combinatorial and algebraic.
%To get motivated, we list two problems: at first a topological  and then a combinatorial one.
\begin{enumerate}
\item
%On the other hand,
Take a $2$-knot, i.e.\  a smooth embedding  $e \co S^2 \to \R^4$, and take the map $\pi \circ e$, where
$\pi \co \R^4 \to \R^3$ is the natural projection.
  It is well-known that $\pi \circ e$ is a stable map after a small perturbation of $e$,
  moreover $\pi \circ e$ becomes a generic immersion of $S^2$ into $\R^3$ after 
  an appropriate isotopy of $e$.
  The multiple points of this generic immersion form an immersed curve in $S^2$ and one can ask 
  what is the criterion for  immersed curves in $S^2$ to come up in this way. This question is the 
 one dimension higher analogue of the problem of  chord diagrams representing knot diagrams \cite{FO99, LM76, RR78}.
In the $2$-knot case, restricting ourselves to  curves in $S^2$ which are disjoint unions of embedded circles
is related to asking about ribbon $2$-knots and their possible double decker sets, see, for example \cite{CKS04}.
So our  problem to study  is whether  a {\it virtual} ribbon $2$-knot  (i.e.\  a
disjoint union of embedded circle pairs in $S^2$) is {\it classical} (i.e.\ comes from a $2$-knot diagram).
\item
Take a circle graph (the intersection graph of chords of the unit circle in the plane), it is an undirected graph.
It is well-known that local complementation at its vertices corresponds to 
switching at the chords (for details, see, for example  \cite{Bou94, FO99} and also \cite{De36}). One can ask 
what is the corresponding class of {\it directed} graphs, i.e.\  whether 
performing a directed version of local complementation at a vertex of a directed graph $G$ 
is related to some switching-like operation
on an object such that $G$ can be interpreted as the  interlacement graph of this object.
In the present paper we show that 
the directed interlacement graph of a paired tree (defined appropriately) has this property.
\end{enumerate}
  
\subsection*{Paired trees and realization}
A disjoint union of smoothly embedded circles $$C_1, \ldots, C_{n-1}$$ in $S^2$ determines a tree, where the  vertices $v_1, \ldots, v_n$ 
of the tree
correspond to the connected components $A_1, \ldots, A_n$ of $S^2 - (C_1 \cup \cdots \cup C_{n-1})$ and
the edges $e_1, \ldots, e_{n-1}$ 
of the tree represent the circles themselves along which $A_1, \ldots, A_n$ are attached together. In a similar fashion
any tree gives us a disjoint union of embedded circles in $S^2$, which is unique up
 to  self-diffeomorphisms of $S^2$. 
  We consider the embedded circles up to such self-diffeomorphisms of $S^2$ and hence we have a bijection between trees and disjoint unions of embedded circles. We say that a tree is {\it paired} if it has an even number of edges, which are arranged into pairs. 
%  Then we also  obtain a bijection between 
%paired trees  and disjoint unions of embedded circles in $S^2$ arranged into pairs. 
A paired tree or the union of the corresponding paired circles is called {\it realizable} if
there is a generic immersion of $S^2$ into $\R^3$ whose multiple point set is equal to 
the  paired circles (so the generic immersion has no triple points and we assume that two circles in $S^2$ form a pair if and only if their images are equal as sets under the immersion, for example, see Figure~\ref{immerzio}).

\begin{figure}[ht] 
\begin{center} 
%\psfrag{v}{$...$}
\epsfig{file=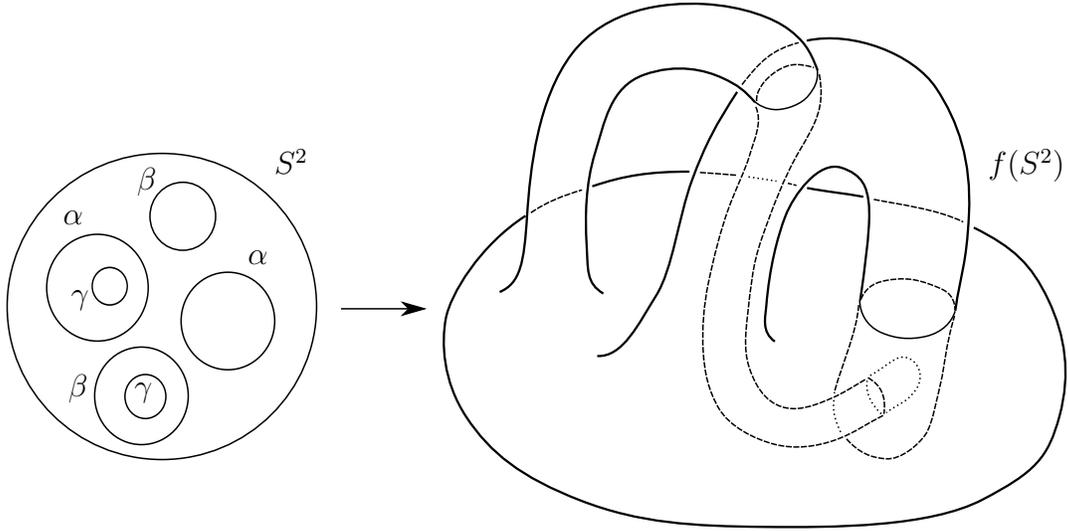, height=7cm} 
\put(-30, 134){$f(S^2)$}
\put(-300, 134){$S^2$}
\put( -380, 115){$\al$}
\put( -310, 100){$\al$}
\put( -378, 50){$\be$}
\put( -352, 128){$\be$}
\put( -353, 50){$\ga$}
\put( -377, 85){$\ga$}
\end{center} 
\caption{{A generic immersion $f \co S^2 \to \R^3$ without triple points. On the left hand side we can see
the disjoint union of the embedded circle pairs $\al$, $\be$ and $\ga$ in $S^2$, which are the double curves of $f$.}}  
\label{immerzio}  
\end{figure}

In the present paper, we study when  paired trees 
(or equivalently  disjoint unions of pairs of embedded circles in $S^2$)
are realizable. 
%This problem is a one dimension higher analogue of the problem of words being realizable as Gauss codes of the immersed circle
%$S^1$  into $\R^2$, see,
%for example \cite{FO99, LM76, RR78}.
%As for characterizing the realizability of disjoint unions of embedded circles in $S^2$,
We expect conditions which are phrased in terms of the relative positioning of the circle pairs in $S^2$.
In the simplest way this would lead to a binary relation on the set of the given circle pairs
measuring how  they are interlaced. 

\subsection*{Interlacement of circle pairs}

 Let $k \geq 1$ be fixed and $C \subset  S^2$, $C = C_1 \cup \cdots \cup C_{2k}$ be a
disjoint union of $2k$ embedded circles.
We suppose that the components of $C$ are arranged into the pairs
${\mathbf c}_j = \{ C_j, C_{j+k} \}$, $1 \leq j \leq k$. %, \{ C_2, C_{2+k} \},  \ldots, \{ C_{k}, C_{2k} \}$.
We define a binary relation $\Subset$ on the set of circle pairs 
$\{  {\mathbf c}_1, \ldots, {\mathbf c}_k \}$
%$$\left\lbrace \{ C_1, C_{1+k} \}, \{ C_2, C_{2+k} \},  \ldots, \{ C_{k}, C_{2k} \} \right\rbrace$$
as follows.

\begin{defn}[Interlacement]\label{circlerel}
Let $1 \leq i \leq k$ and let $A_i \subset S^2$ denote the annulus bounded by $C_i \cup C_{i+k}$.
Then for a $1 \leq j \leq k$ the relation
 ${\mathbf c}_j \Subset {\mathbf c}_i$ holds if 
% $\{C_u, C_{u+k} \} \gtrdot \{C_v, C_{v+k} \}$ holds if 
and only if
$i \neq j$ and 
there is a smooth generic embedded arc $\al$ in $A_i$ connecting $C_i$ and $C_{i+k}$ such that 
the number of intersection points 
$\al \cap (C_j \cup C_{j+k})$ is odd.
\end{defn}

\begin{rem}\label{interremark}\noindent
\begin{enumerate}
\item
${\mathbf c}_j \Subset {\mathbf c}_i$
%$\{C_u, C_{u+k} \} \gtrdot \{C_v, C_{v+k} \}$
 if and only if exactly one of $C_j, C_{j+k}$ represents the generator of 
the homology group $H_1(A_i; \Z_2)$.
\item
The relation $\Subset$ is irreflexive and not necessarily transitive or symmetric. 
%\item
%In  \cite[Section~2.3]{Li04} the {\it  linking} of pairs of circles is defined. In terms of our notation
%  ${\mathbf c}_j$ and ${\mathbf c}_i$  
% % $\{C_u, C_{u+k} \}$ and $\{C_v, C_{v+k} \}$ 
% are linked if and only if
%%$\{C_u, C_{u+k} \} \gtrdot \{C_v, C_{v+k} \}$ or $\{C_u, C_{u+k} \} \lessdot \{C_v, C_{v+k} \}$
%${\mathbf c}_j \Subset {\mathbf c}_i$ or ${\mathbf c}_i \Subset {\mathbf c}_j$.
%%So the linking is a symmetric relation.
%\item
%Since disjoint unions  of circles embedded in $S^2$ and arranged into pairs 
% are in bijection with  paired trees as we explained in Section~1,
%the relation $\Subset$ gives a corresponding relation between the pairs of edges of a paired tree,
%which we denote by $\sqsubset$.
%%We denote this relation by $\gtrdot$ as well.
%%\item
%In a paired tree $T$ let
%$\{ a_i, b_i \}, \{ a_j, b_j \}$ be two
% pairs of edges.
%Then 
%$$\{ a_j, b_j \} \sqsubset \{ a_i, b_i \}$$
%if and only if
%there is a path in $T$
%connecting $a_i$ to $b_i$ 
%and containing exactly one of $a_j$, $b_j$, see Figure~\ref{interlacing}.
\end{enumerate}
\end{rem}

%

%\begin{figure}[ht] 
%\begin{center} 
%%\psfrag{v}{$...$}
%\epsfig{file=, height=7cm} 
%\put(-15, 130){$b_i$}
%\put(-78, 145){$b_j$}
%\put(-62, 40){$a_i$}
%\put(-40, 85){$a_j$}
%\put(-205, 125){$b_i$}
%\put(-250, 35){$a_i$}
%\put(-228, 80){$a_j$}
%\put(-185, 170){$b_j$}
%\end{center} 
%\caption{{The two possible cases of $\{ a_j, b_j \} \sqsubset \{ a_i, b_i \}$. 
%Note that in the picture on the left hand side also $\{ a_i, b_i \} \sqsubset \{ a_j, b_j \}$, cf.\ the two possible cases of linked pairs in \cite[Figure~6]{Li04}.
%Although it may be a little confusing, our motivation to use the term `interlacement' is
%to express the fact that the relations $\Subset$ and $\sqsubset$ are typically not symmetric while 
%  the `linking'  in \cite{Li04} is always symmetric.}}  
%\label{interlacing}  
%\end{figure}

%We will call this the {\it interlacement} of the circle pairs 
%(see Definition~\ref{circlerel})
%and
The simplest way to visualize and work with such a relation is to form the {\it interlacement graph} of the circle pairs.% (see Definition~\ref{linkinggraph}).

\begin{defn}[Interlacement graph]\label{linkinggraph}
The {\it  interlacement graph} $G$ of $C$  is defined as follows.
The vertices $v_1, \ldots, v_k$ of $G$ correspond to
the circle pairs 
$\mathbf c_1, \ldots, \mathbf c_k$, respectively, and
%$\left\lbrace \{ C_1, C_{1+k} \},   \ldots, \{ C_{k}, C_{2k} \} \right\rbrace$ and
 a directed edge goes 
 from $v_i$ to $v_j$ if and only if
 $  \mathbf c_i \Subset   \mathbf c_j$.
\end{defn}

It is important to note that unlike many similar interlacement relations (coming from circle graphs for example),
our interlacement relation is not necessarily symmetric and our interlacement graph is a directed graph\footnote{In \cite{FO99}
the authors call two intersecting chords of a circle {\it interlacement} and it is a symmetric relation.}.

For better understanding, we give the corresponding relation on paired trees as well.
Since disjoint unions  of circles embedded in $S^2$ and arranged into pairs 
 are in bijection with  paired trees,
the relation $\Subset$ gives a corresponding relation between the pairs of edges of a paired tree,
which we denote by $\sqsubset$.
%We denote this relation by $\gtrdot$ as well.
%\item
In a paired tree $T$ let
$\{ a_i, b_i \}, \{ a_j, b_j \}$ be two
 pairs of edges.
Then 
$$\{ a_j, b_j \} \sqsubset \{ a_i, b_i \}$$
if and only if
there is a path in $T$
connecting $a_i$ to $b_i$ 
and containing exactly one of $a_j$, $b_j$, see Figure~\ref{interlacing}.

\begin{figure}[ht] 
\begin{center} 
%\psfrag{v}{$...$}
\epsfig{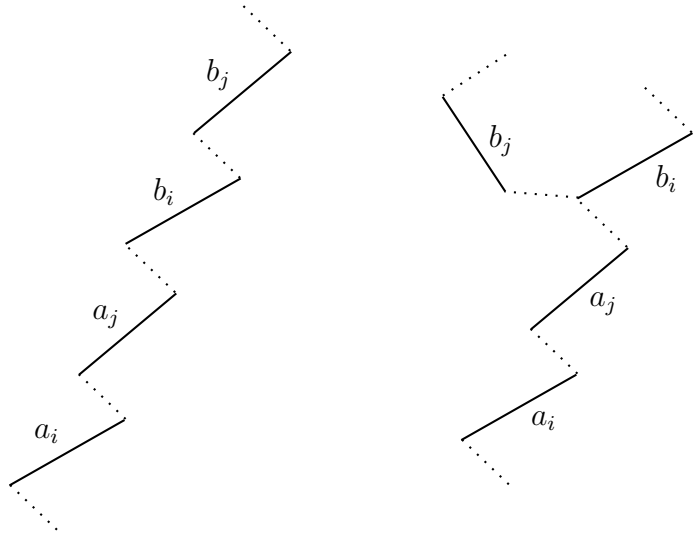} 
\put(-15, 130){$b_i$}
\put(-78, 145){$b_j$}
\put(-62, 40){$a_i$}
\put(-40, 85){$a_j$}
\put(-205, 125){$b_i$}
\put(-250, 35){$a_i$}
\put(-228, 80){$a_j$}
\put(-185, 170){$b_j$}
\end{center} 
\caption{{The two possible cases of $\{ a_j, b_j \} \sqsubset \{ a_i, b_i \}$. 
Note that in the picture on the left hand side also $\{ a_i, b_i \} \sqsubset \{ a_j, b_j \}$, cf.\ the two possible cases of linked pairs in \cite[Figure~6]{Li04}.
Although it may be a little confusing, our motivation to use the term `interlacement' is
to express the fact that the relations $\Subset$ and $\sqsubset$ are typically not symmetric while 
  the `linking'  in \cite{Li04} is always symmetric.}}  
\label{interlacing}  
\end{figure}

Recall that 
the {\it linking graph} $G$ of a paired tree $T$  is an {undirected} graph defined in \cite{Li04} whose vertices correspond to 
the pairs of edges of the paired tree and two vertices of $G$ are connected by an edge if and only if the
two corresponding pairs of edges $\{ a_1, b_1 \}$ and $\{ a_2, b_2 \}$ of the tree $T$
are linked, i.e.\
there exists $i, j \in \{1, 2\}$, $i \neq j$, such that 
 the unique path from the edge $a_i$ to the edge $b_i$ 
contains  $a_j$ or $b_j$ but not both, see   \cite[Figure~6]{Li04}.
Note that  our interlacement graph determines the 
linking graph of  \cite{Li04} but the converse is not true. In fact two circle pairs ${\mathbf c}_j$ and ${\mathbf c}_i$  
 are linked if and only if
${\mathbf c}_j \Subset {\mathbf c}_i$ or ${\mathbf c}_i \Subset {\mathbf c}_j$.

\subsection*{Local complementation}

A version of (directed) local complementation of directed graphs will be used later in the paper, which we define here (cf.\ local complementation in  \cite{Bou87}).
Let $G$ be a directed graph and let $p$ be a vertex of $G$.
The operation called local complementation at the vertex $p$
creates a new graph $H$ from the graph $G$ as follows.
We will say two vertices of a directed graph are connected if there is an edge (directed arbitrarily) between them.
Denote by $NG(u)$ the subgraph of $G$ whose vertices are connected (or identical) to the vertex $u$.
 
 First we add a new vertex $q$ to the graph $G$, so $H$ has one more vertex than $G$.
 The vertices of $H$ different from $q$ 
 are in natural bijection with the vertices of $G$ and we denote them with the same symbol.
 Then we have
 \begin{enumerate}[(i)]
\item
$NH(p)= NG(p)$,
\item
$NH(q) = NG(p)$,
\item
for any vertex $u$ of $G$ we have $NH(u) = NG(u)$ if $u$ 
is not connected by any edge to $p$   
in $G$ and
\item
for any two vertices $u, v$ in $H$ not equal to $p$ or $q$ if 
$u$ has an edge going into $p$ and $p$  has an edge going into $v$, then
$u$ has an edge going into $v$ in $H$ if and only if it has no such edge in $G$.
\end{enumerate}

For an example, see Figure~\ref{localcompl}.
\begin{figure}[ht] 
\begin{center} 
%\psfrag{v}{$...$}
\epsfig{file=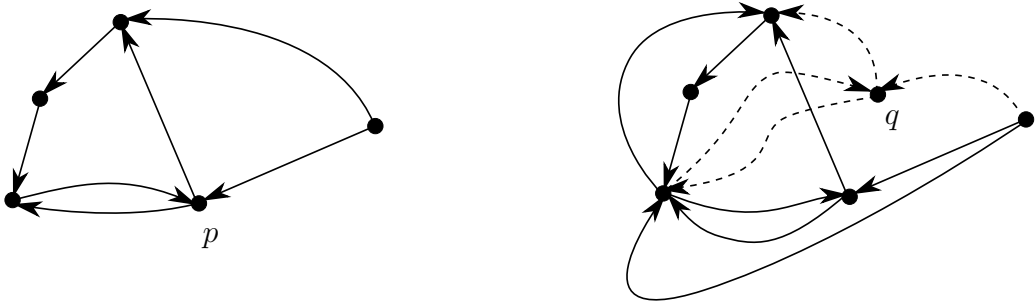, height=4cm} 
\put(-315, 22){$p$}
\put(-57, 67){$q$}
\end{center} 
\caption{{Local complementation at the vertex $p$ of the graph  on the left hand side
transforms it into the graph 
%results the graph
 on the right hand side. The edges starting from and going to the new vertex $q$ are drawn with different style.}}
\label{localcompl}  
\end{figure}
Note that in the new graph $H$ there is no edge between the vertices $p$ and $q$.
Local complementation has a close relationship with basis change in $\Z_2$-vector spaces and matrices, see later.

\subsection*{The double switch operation}

A characterization of realizable paired trees was given already by \cite{Li04}.
%  which can be viewed as 
%the $2$-dimensional analogue of Theorem~6  in  \cite{FO99}. 
%%However \cite[Theorem~6]{FO99} was used to obtain a new proof for 
%%Rosenstiehl's classical characterization of words being  Gauss codes in Theorem~10 of \cite{FO99}.
%%%If we compared the results of \cite{Li04}  to \cite[Theorem~6]{FO99}, then
%%%we should say that 
%Our main  result %in the present paper
% is 
%the $2$-dimensional analogue of  \cite[Theorem~10]{FO99}
%and we utilize the results of \cite{Li04}
%just as Theorem~10 made practical use of Theorem~6 in \cite{FO99}.
In order to characterize realizable paired trees,  \cite{Li04}  introduced  an operation called double switch 
%which is an analogue  of 
(c.f.\ the operation D-switch in \cite{FO99}).
The double switch of a paired tree $T$ along a pair of edges $\{ a, b \}$
yields another paired tree $T'$ with one more pair of edges as follows.
Take the smallest path $P$ in $T$ containing $a$ and $b$,
denote the ending vertices  of $P$ by $\al$ and $\be$.
Then cut the tree $T$ at $\al$ and $\be$ and glue the (possibly empty) components
not containing $P$ back at the opposite places:
the component which was connected to $\al$ now is connected to $\be$ and vice versa.
Also put a new vertex $u$  in the interior of $a$ dividing it into $a'$, $a''$ and a new vertex $v$
in the interior of $b$ dividing it into $b'$, $b''$, this gives the new tree $T'$.
Obviously the edges of $T' - a' - a'' - b' - b''$\footnote{For a graph $G$ and its edge $e$ we denote by $G - e$ 
the subgraph of $G$ obtained by deleting the edge $e$.} are 
naturally identified with the edges of $T - a - b$. 
Then define the  pairing of $T'$ 
to coincide with the pairing of $T$  on the edges of $T' - a' - a'' - b' - b''$
and to be $\{a', b' \}$ and $\{ a'', b'' \}$ where we suppose that
going along a path in $T'$ containing these edges gives the order $a', a'', b'', b'$, for example, see Figure~\ref{graphdoubleswitch}.

\begin{figure}[ht] 
\begin{center} 
%\psfrag{v}{$...$}
\epsfig{file=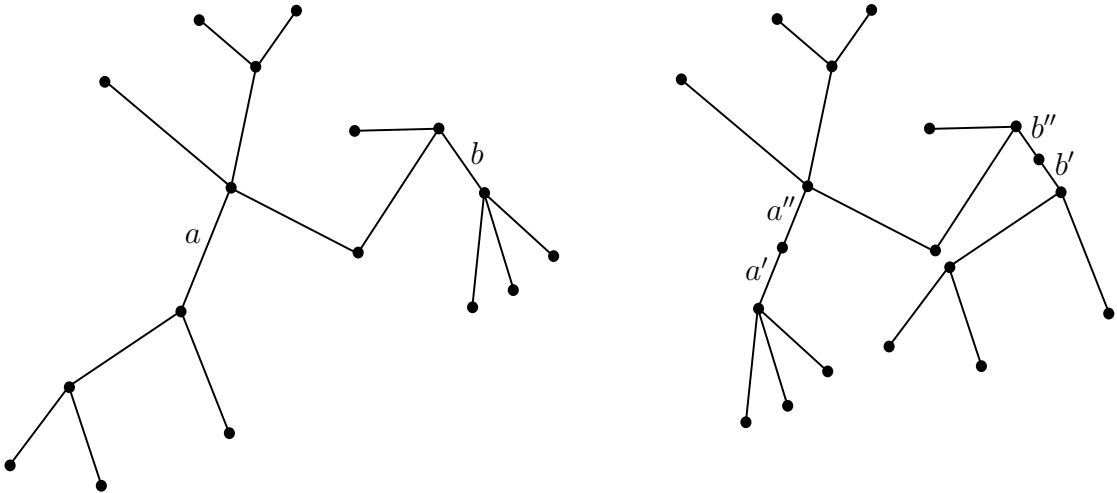, height=6.5cm} 
\put(-32, 134){$b''$}
\put(-23, 120){$b'$}
\put(-140, 80){$a'$}
\put(-132, 103){$a''$}
\put(-352, 94){$a$}
\put(-244, 124){$b$}
\end{center} 
\caption{{Double switch of a tree along the edge pair $\{a, b\}$.}}  
\label{graphdoubleswitch}  
\end{figure}

\subsection*{Characterizing realizable paired trees}

In \cite[Theorem~1]{Li04}
 a sufficient and necessary condition was obtained: 
a paired tree  is realizable if and only if after performing double switches  successively  along all of its pairs of edges 
the {linking graph} of the resulting tree is bipartite.
%The linking graph $G$ of a paired tree $T$  is an {undirected} graph defined in \cite{Li04} whose vertices correspond to 
%the pairs of edges of the tree and two vertices of $G$ are connected by an edge if and only if the
%two corresponding pairs of edges $\{ a_1, b_1 \}$ and $\{ a_2, b_2 \}$ of the tree $T$
%are linked, i.e.\
%there exists $i, j \in \{1, 2\}$, $i \neq j$, such that 
% the unique path from the edge $a_i$ to the edge $b_i$ 
%contains  $a_j$ or $b_j$ but not both\footnote{Note that  our interlacement graph determines the 
%linking graph of  \cite{Li04} but the converse is not true. In fact two circle pairs ${\mathbf c}_j$ and ${\mathbf c}_i$  
% are linked if and only if
%${\mathbf c}_j \Subset {\mathbf c}_i$ or ${\mathbf c}_i \Subset {\mathbf c}_j$.}, see   \cite[Figure~6]{Li04}.
This characterization of paired trees %given in \cite{Li04} 
is analogous to the characterization of words being Gauss codes in
 \cite[Theorem~6]{FO99},
 where 
 successive D-switches of linking graphs were used. 
 It
requires performing a sequence of successive
 operations (namely the double switches on the paired trees) in order to make use of it.
 From this viewpoint our main  result in the present paper
 is 
the $2$-dimensional analogue of  \cite[Theorem~10]{FO99} (see also \cite{RR78}) 
and we can say that we utilize the results of \cite{Li04}
just as Theorem~10 made practical use of Theorem~6 in \cite{FO99}.

In the present paper, we give a characterization of realizable paired trees
%, which is similar to 
%Rosenstiehl's characterization of words being Gauss codes of immersed plane curves, see \cite[Theorem~10]{FO99} and \cite{RR78}. 
in terms of the interlacement graphs of the trees.
%Our result  is a one dimension higher analogue of Rosenstiehl's characterization of words being Gauss codes of immersed plane curves, see,
%for example \cite{FO99, LM76, RR78}.
%The interlacement graph, which we use in phrasing our result, can be viewed as a directed version of the  linking graph of the paired tree.
The benefit of this characterization is that
it gives an equivalent  condition for realizability in terms of an explicit property of the given paired tree.
For instance, from our Theorem~\ref{mainthm} immediately follows that
if among the given system of circle pairs in $S^2$ there is a circle pair $C_1, C_2$ such that 
an {\it odd number} of other circle pairs are positioned with $C_1, C_2$  as in Figure~\ref{interlaceexample}, then
this system is not realizable, for more details, see Corollary~\ref{egyszerukov} and Corollary~\ref{intergraphnecessary}.\
(For an example of a paired tree and applying this criterion, see Figure~\ref{interlacementgraphexample}.)

\begin{figure}[ht] 
\begin{center} 
%\psfrag{v}{$...$}
\epsfig{file=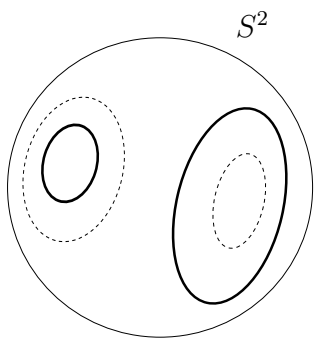, height=4cm} 
\put(-30, 114){$S^2$}
\end{center} 
\caption{{Two pairs of circles in $S^2$. The two dashed circles form a pair and the two bold circles form a pair.}}  
\label{interlaceexample}  
\end{figure}

One of our most important observations %in the present paper
is that the interlacement graphs of paired trees before and after a double switch are related to each other
by a version of the  local complementation for directed graphs, we explain this fact in detail during the proof of 
Proposition~\ref{switchneigh}. 
(Although this seems to be an analogue of the local complementation in \cite[Section~2]{FO99},
it is  less obvious. Also note that  local complementation can become useful for us 
only because we have found the right notion of directed interlacement graphs.)
By this knowledge we can compare the interlacement graphs and we can 
find properties of the graphs which remain unchanged during double switches.
We formalize these properties in an algebraic way.
The difficulty at this point is that the incidence matrix of a directed graph is not necessarily symmetric and hence
we need a  more sophisticated
argument than the straightforward algebraization 
  of \cite[Section~4]{FO99}.
These and \cite{Li04} together yield our main Theorem~\ref{mainthm},
which characterizes the realizable circle pairs in $S^2$.
%Throughout the proof we use elementary graph theory, combinatorics and algebra.

%
%We have to note that characterizing in any terms those  curves in $S^2$ which can be 
%the multiple point set of a generic immersion {\it with triple points}  into $\R^3$ 
%is a problem very much unsolved. The results of the present paper hopefully could give 
%some geometric intuition about how to approach this problem in general 
%(see also \cite{Bo07}).

The paper is organized as follows.
In Section~2 we announce our main results.
In Section~3, we state Theorem~\ref{condbipartite}, which is of central importance and allows us to prove our main results (its proof is
deferred to later sections).
%In Section~3 we prove our main results with the help of our Theorem~\ref{condbipartite} of central importance
% (whose entire proof will presented in later sections).
In Section~4  we describe in a very detailed way how the double switch operation on paired trees
acts on their interlacement graphs. By using this theory we prove Theorem~\ref{condbipartite}
in Section~4
with the help of some quite technical statements. Finally, these technical statements are fully proved only in Section~5.

\subsection*{Acknowledgements}

The author was supported by 
OTKA NK81203 and by the Lend\"ulet program 
of the Hungarian Academy of Sciences.
The author thanks the referees for the valuable comments which improved the paper.
 
\section{Main results}

So let $k \geq 1$ be fixed and $C \subset  S^2$, $C = C_1 \cup \cdots \cup C_{2k}$ be a
disjoint union of $2k$ embedded circles arranged into the pairs
${\mathbf c}_j = \{ C_j, C_{j+k} \}$, $1 \leq j \leq k$.
%We suppose that the components of $C$ are arranged into the pairs
%${\mathbf c}_j = \{ C_j, C_{j+k} \}$, $1 \leq j \leq k$. %, \{ C_2, C_{2+k} \},  \ldots, \{ C_{k}, C_{2k} \}$.
%We say that $C$ is {\it realizable} if 
%there exists a generic immersion 
% $f \co S^2 \to \R^3$ with multiple point set equal to  $C$ and $$f(C_{j}) = f(C_{j+k})$$ for all $1 \leq j \leq k$.

%
%We define a binary relation $\Subset$ on the set of circle pairs 
%$\{  {\mathbf c}_1, \ldots, {\mathbf c}_k \}$
%%$$\left\lbrace \{ C_1, C_{1+k} \}, \{ C_2, C_{2+k} \},  \ldots, \{ C_{k}, C_{2k} \} \right\rbrace$$
%as follows.

%\begin{defn}[Interlacement]\label{circlerel}
%Let $1 \leq i \leq k$ and let $A_i \subset S^2$ denote the annulus bounded by $C_i \cup C_{i+k}$.
%Then for a $1 \leq j \leq k$ the relation
% ${\mathbf c}_j \Subset {\mathbf c}_i$ holds if 
%% $\{C_u, C_{u+k} \} \gtrdot \{C_v, C_{v+k} \}$ holds if 
%and only if
%$i \neq j$ and 
%there is a smooth generic embedded arc $\al$ in $A_i$ connecting $C_i$ and $C_{i+k}$ such that 
%the number of intersection points 
%$\al \cap (C_j \cup C_{j+k})$ is odd.
%\end{defn}

For any two circle pairs $\mathbf c$ and $\mathbf d$
let $[\mathbf c, \mathbf d ]$ denote
the set of circle pairs $\mathbf x$ such that $\mathbf c \Subset \mathbf x$ and $\mathbf x \Subset \mathbf d$.
Our main result is the following.
As before, let $\{  {\mathbf c}_1, \ldots, {\mathbf c}_k \}$ be a given set of disjoint circle pairs in $S^2$.
\begin{thm}[Main theorem]\label{mainthm}
The realizability of $C$ is equivalent to the following condition: 
 the set $\{ 1, \ldots, k \}$
 %the set of pairs $\{ \{ C_1, C_2 \},  \ldots, \{ C_{2k-1}, C_{2k} \} \}$
 has a bipartition such that
 for any  $i, j \in \{ 1, \ldots, k \}$ 
 \begin{enumerate}[{\rm (i)}]
\item
$i$ and $j$ belong to the same class of the bipartition and
\item
$\mathbf c_i \Subset \mathbf c_j$
 \end{enumerate}
 if and only if 
  $[\mathbf c_i, \mathbf c_j ]$ 
% where   $1 \leq i \leq k$,
has an odd  number of elements.
\end{thm}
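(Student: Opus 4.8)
The plan is to combine Lippner's theorem with an algebraic reformulation of the stated condition, the bridge between them being the behaviour of the interlacement graph under double switches. First I would pass to matrices over $\Z_2$. Let $M$ be the adjacency matrix of the interlacement graph $G$, so that $M_{ij}=1$ exactly when $\mathbf{c}_i \Subset \mathbf{c}_j$, and $M_{ii}=0$ by irreflexivity (Remark~\ref{interremark}). A pair $\mathbf{x}$ lies in $[\mathbf{c}_i,\mathbf{c}_j]$ precisely when $\mathbf{c}_i \Subset \mathbf{x} \Subset \mathbf{c}_j$, and the cases $\mathbf{x}=\mathbf{c}_i$ and $\mathbf{x}=\mathbf{c}_j$ are excluded by irreflexivity, so
\begin{equation}\label{eq:count}
|[\mathbf{c}_i,\mathbf{c}_j]| \;=\; \sum_{x} M_{ix}M_{xj} \;=\; (M^2)_{ij}.
\end{equation}
Encoding the bipartition as a vector $\chi \in \Z_2^k$ (the two classes being $\chi_i=0$ and $\chi_i=1$) and using $1+\chi_i+\chi_j$ as the indicator that $i$ and $j$ lie in the same class, the condition of the theorem becomes the single identity $(M^2)_{ij} \equiv (1+\chi_i+\chi_j)\,M_{ij} \pmod 2$ for all $i,j$. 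Thus the theorem asserts that $C$ is realizable if and only if this identity is solvable in $\chi$.

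Next I would apply Lippner's criterion in the form quoted above: $C$ is realizable if and only if the linking graph $L'$ of the paired tree $T'$ obtained by double switching successively along all $k$ original pairs of edges is bipartite. The structural input, established in the analysis of double switches (Proposition~\ref{switchneigh}), is that each double switch acts on the interlacement graph by one directed local complementation, which adds exactly one vertex---matching the fact that a double switch turns one pair into two. Hence the interlacement matrix $M'$ of $T'$ is obtained from $M$ by $k$ successive local complementations, and, by the relation between $\Subset$ and linking recorded in the introduction, $L'$ is the underlying undirected graph of $G'$, with an edge $\{s,t\}$ whenever $M'_{st}=1$ or $M'_{ts}=1$. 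The whole problem thereby reduces to a purely combinatorial one: the underlying undirected graph of the terminal interlacement graph $G'$ is bipartite if and only if the identity of the previous paragraph is solvable for the initial matrix $M$.

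This last equivalence is exactly the content of Theorem~\ref{condbipartite}, which I would treat as the engine of the argument: granting it, the main theorem follows immediately by concatenating it with Lippner's criterion. To prove Theorem~\ref{condbipartite} itself I would track the pair $(M,M^2)$ through the sequence of local complementations, reading each local complementation as a $\Z_2$ change of basis on the bilinear data carried by $G$, and show that solvability of $(M^2)_{ij}\equiv(1+\chi_i+\chi_j)M_{ij}$ is preserved at every step while for the terminal graph it collapses to ordinary bipartiteness of $L'$. The vector $\chi$ should emerge from the diagonal self-interlacement data created by the new vertices $q$ introduced at each local complementation.

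The main obstacle is precisely this invariance, and it is where the directed setting costs us. Because $M$ is not symmetric, I cannot simply transcribe the symmetric-matrix algebra of \cite[Section~4]{FO99}: I must simultaneously control the symmetric datum $M+M^{\top}$ (which governs $L'$), the individual directed entries distinguishing $M_{ij}$ from $M_{ji}$, and the quadratic datum $M^2 \bmod 2$, and verify that their transformation laws under local complementation are mutually compatible. I expect the delicate points to be checking that the diagonal constraints $(M^2)_{ii}\equiv 0$ forced by the reformulated identity are automatically maintained, and that the parity $(M^2)_{ij}$ indeed records whether $i$ and $j$ land in the same class of the terminal bipartition.
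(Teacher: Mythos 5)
Your proposal follows essentially the same route as the paper: you translate the condition on $[\mathbf{c}_i,\mathbf{c}_j]$ into the $\Z_2$-identity $(M^2)_{ij}=(1+\chi_i+\chi_j)M_{ij}$ (which is exactly the condition $\langle Lu,L^Tv\rangle=\langle Lu,v\rangle(1+\langle A,u+v\rangle)$ of Theorem~\ref{condbipartite}), combine it with Lippner's bipartiteness criterion for the tree obtained after all double switches, and identify Theorem~\ref{condbipartite} --- proved by tracking the directed local complementations of Proposition~\ref{switchneigh} --- as the engine, just as the paper does. The reduction is correct and complete modulo Theorem~\ref{condbipartite}, whose proof you only sketch, but the paper's own proof of the Main Theorem likewise defers that result to later sections.
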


Theorem~\ref{mainthm} and  Theorem~\ref{mainthm2} below are analogous 
to Rosenstiehl's characterization of words being Gauss codes of immersed plane curves,
see, for example, \cite[Section~4, Theorem~10]{FO99}.

By taking $i=j$ in Theorem~\ref{mainthm},  
 we get 

\begin{cor}\label{egyszerukov}
If $C$ is realizable, then for any 
circle pair $\mathbf c$
the number of circle pairs $\mathbf x$ such that 
$\mathbf c \Subset \mathbf x$ and $\mathbf x \Subset \mathbf c$ both hold is even.
%$u \in \{ 1, \ldots, k \}$ 
%the number of %integers $1 \leq i \leq k$
%circle pairs $\{ C_i, C_{i+k} \}$
%such that $\{ C_u, C_{u+k} \} \gtrdot \{ C_i, C_{i+k} \}$ and 
%$\{ C_i, C_{i+k} \} \gtrdot \{ C_u, C_{u+k} \}$ both hold
%is even.
\end{cor}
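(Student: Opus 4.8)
The plan is to apply Theorem~\ref{mainthm} along the diagonal $i = j$ and read off what the biconditional forces there. Since $C$ is assumed realizable, the theorem hands us a bipartition of $\{1, \ldots, k\}$ for which the stated equivalence holds for \emph{every} choice of indices $i, j$; in particular it must hold when we set $i = j$. So the whole argument is just a substitution followed by unwinding the definition of the bracket $[\,\cdot\,, \cdot\,]$.

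First I would analyze the left-hand side of the biconditional in the diagonal case. Condition (i), that $i$ and $j$ lie in the same class of the bipartition, is automatically satisfied when $i = j$. Condition (ii) asks whether $\mathbf{c}_i \Subset \mathbf{c}_i$; but by Remark~\ref{interremark}(2) the relation $\Subset$ is irreflexive, so this never holds. Hence the conjunction of (i) and (ii) is false for $i = j$. Because the biconditional must nonetheless hold, its right-hand side must be false as well, i.e.\ $[\mathbf{c}_i, \mathbf{c}_i]$ has an \emph{even} number of elements. Finally I would unwind the definition: $[\mathbf{c}_i, \mathbf{c}_i]$ is by definition the set of circle pairs $\mathbf{x}$ with $\mathbf{c}_i \Subset \mathbf{x}$ and $\mathbf{x} \Subset \mathbf{c}_i$, which is precisely the set counted in the corollary (with $\mathbf{c} = \mathbf{c}_i$). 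Since $i$ was arbitrary, this yields the claim for every circle pair $\mathbf{c}$.

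The only point that needs genuine care — the ``obstacle,'' such as it is — is confirming that irreflexivity of $\Subset$ makes condition (ii) fail on the diagonal, so that the biconditional collapses to the parity statement and not to its negation; if $\Subset$ were reflexive the conclusion would instead be oddness. This is immediate from Definition~\ref{circlerel}, which explicitly requires $i \neq j$, and is recorded in Remark~\ref{interremark}(2). No further input beyond Theorem~\ref{mainthm} is needed.
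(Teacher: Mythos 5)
Your argument is correct and is exactly the paper's route: the paper derives this corollary by the one-line remark ``taking $i=j$ in Theorem~\ref{mainthm},'' and your proposal simply spells out the details (irreflexivity of $\Subset$ kills condition (ii) on the diagonal, forcing $[\mathbf{c}_i,\mathbf{c}_i]$ to have even cardinality). Nothing further is needed.
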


%
%\begin{defn}[Interlacement graph]\label{linkinggraph}
%The {\it  interlacement graph} $G$ of $C$  is defined as follows.
%The vertices $v_1, \ldots, v_k$ of $G$ correspond to
%the circle pairs 
%$\mathbf c_1, \ldots, \mathbf c_k$, respectively, and
%%$\left\lbrace \{ C_1, C_{1+k} \},   \ldots, \{ C_{k}, C_{2k} \} \right\rbrace$ and
% a directed edge goes 
% from $v_i$ to $v_j$ if and only if
% $  \mathbf c_i \Subset   \mathbf c_j$.
%\end{defn}

%
%For example, see Figure~\ref{interlacementgraphexample}.

%
%\begin{rem}
%The linking graph in  \cite{Li04} of a paired tree was an  undirected graph whose vertices 
%correspond to the edge pairs of the tree just as in our case
%and two vertices are joined by an edge in the linking graph if and only if there is an oriented edge in our interlacement graph 
%%(see Definition~\ref{linkinggraph})
% from one vertex to the other.
%\end{rem}

In the following, we rephrase Theorem~\ref{mainthm}
in terms of directed graphs. 
For a vertex $v$ of $G$ let $N_{out}(v)$ and $N_{in}(v)$ denote
the set of out-neighbors and the set of in-neighbors of $v$, respectively.
The following theorem is clearly equivalent to Theorem~\ref{mainthm}.

\begin{thm}\label{mainthm2}
$C$ is realizable  if and only if
 the set of vertices of $G$ has a bipartition such that
 for any two vertices $u, v$ of $G$ 
the number of vertices in  $N_{out}(u) \cap N_{in}(v)$ is odd if and only if
\begin{enumerate}[{\rm (i)}]
\item
$u$ and $v$ belong to the same class of the partition and
\item
$v \in N_{out}(u)$.
\end{enumerate}
\end{thm}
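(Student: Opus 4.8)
The plan is to deduce Theorem~\ref{mainthm2} directly from Theorem~\ref{mainthm}, since the two statements differ only in the language used to phrase one and the same combinatorial condition. The entire argument is a clause-by-clause translation along the bijection $v_m \leftrightarrow \mathbf c_m$ between the vertices of $G$ and the circle pairs (equivalently, along the identification of $\{1,\ldots,k\}$ with the vertex set of $G$). First I would record that, by Definition~\ref{linkinggraph}, there is a directed edge $v_i \to v_j$ in $G$ if and only if $\mathbf c_i \Subset \mathbf c_j$. Consequently, writing $u = v_i$ and $v = v_j$, the out-neighborhood is $N_{out}(v_i) = \{ v_m : \mathbf c_i \Subset \mathbf c_m \}$ and the in-neighborhood is $N_{in}(v_j) = \{ v_m : \mathbf c_m \Subset \mathbf c_j \}$.

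The key identity is then immediate: intersecting these two sets gives
\[
N_{out}(v_i) \cap N_{in}(v_j) = \{ v_m : \mathbf c_i \Subset \mathbf c_m \ \text{and}\ \mathbf c_m \Subset \mathbf c_j \},
\]
which under the bijection $v_m \leftrightarrow \mathbf c_m$ is carried exactly onto the set $[\mathbf c_i, \mathbf c_j]$ defined in Section~2. Hence $|N_{out}(v_i) \cap N_{in}(v_j)| = |[\mathbf c_i, \mathbf c_j]|$, so the parity clause ``$N_{out}(u) \cap N_{in}(v)$ has odd size'' in Theorem~\ref{mainthm2} is literally the same assertion as ``$[\mathbf c_i, \mathbf c_j]$ has an odd number of elements'' in Theorem~\ref{mainthm}.

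It then remains to match the two right-hand conditions. Condition (ii) of Theorem~\ref{mainthm2}, namely $v \in N_{out}(u)$, unwinds to $v_i \to v_j$, i.e.\ $\mathbf c_i \Subset \mathbf c_j$, which is exactly condition (ii) of Theorem~\ref{mainthm}; and condition (i), ``$u$ and $v$ lie in the same class'', transfers verbatim, because a bipartition of the vertex set of $G$ is the same datum as a bipartition of $\{1,\ldots,k\}$. Therefore the biconditional quantified over all ordered pairs of vertices in Theorem~\ref{mainthm2} coincides, term for term, with the biconditional quantified over all $i,j \in \{1,\ldots,k\}$ in Theorem~\ref{mainthm}, and the equivalence of the two realizability criteria follows by invoking Theorem~\ref{mainthm}.

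There is essentially no analytic obstacle here; the only point demanding a little care is the range of the quantifier. Theorem~\ref{mainthm} quantifies over all $i,j$, \emph{including} $i=j$, so I must read ``any two vertices $u,v$'' in Theorem~\ref{mainthm2} as also permitting $u=v$. On the diagonal the translation stays consistent: condition (ii) becomes $\mathbf c_i \Subset \mathbf c_i$, which is false by irreflexivity (Remark~\ref{interremark}(2)), forcing $|[\mathbf c_i,\mathbf c_i]|$ to be even --- precisely the content isolated in Corollary~\ref{egyszerukov}. Making explicit that the diagonal is included is the one spot where the word ``clearly'' conceals a (trivial) verification.
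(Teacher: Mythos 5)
Your proposal is correct and takes the same route as the paper, whose proof of Theorem~\ref{mainthm2} is exactly the observation that $G$ encodes the relation $\Subset$ so the statement is a direct translation of Theorem~\ref{mainthm}; you simply spell out the dictionary (in particular $N_{out}(v_i)\cap N_{in}(v_j) = [\mathbf c_i,\mathbf c_j]$ and the diagonal case $u=v$) that the paper leaves implicit.
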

\begin{proof}
The graph $G$ just expresses the relation $\Subset$ so the statement follows immediately from Theorem~\ref{mainthm}.%, details are left to the reader.
\end{proof}

By taking $i=j$ in Theorem~\ref{mainthm2}, we get

\begin{cor}\label{intergraphnecessary}
In particular, if $C$ is realizable, then 
the number of neighbors of any vertex $v$ of the interlacement graph $G$ which are connected by two oppositely
directed edges to $v$ is even.
\end{cor}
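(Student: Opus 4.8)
The plan is to obtain this corollary as the diagonal case $u = v$ of Theorem~\ref{mainthm2}, exactly as the sentence preceding the statement suggests. First I would translate the quantity named in the corollary into the language of the sets $N_{out}$ and $N_{in}$: a vertex $w$ is joined to $v$ by two oppositely directed edges precisely when both $v \to w$ and $w \to v$ hold, that is, when $w \in N_{out}(v)$ and $w \in N_{in}(v)$. Hence the number in question is exactly $|N_{out}(v) \cap N_{in}(v)|$, which is the cardinality appearing in Theorem~\ref{mainthm2} once we put $u = v$.

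Next, assuming $C$ is realizable, Theorem~\ref{mainthm2} supplies a bipartition of the vertex set of $G$ for which the stated biconditional holds for every ordered pair of vertices; in particular it holds for the pair $(v, v)$. I would then inspect the right-hand side of that biconditional in this diagonal case. Condition (i), that $v$ and $v$ lie in the same class of the bipartition, is automatically true. Condition (ii), that $v \in N_{out}(v)$, would require a directed loop at $v$; but the relation $\Subset$ is irreflexive by Remark~\ref{interremark}(2), so $G$ has no loops and condition (ii) fails.

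Consequently the conjunction of (i) and (ii) is false, and the biconditional forces the left-hand side to be false as well: $|N_{out}(v) \cap N_{in}(v)|$ is not odd, hence even, which is precisely the assertion. I do not anticipate any genuine obstacle, since all the content is carried by Theorem~\ref{mainthm2}; the only points needing care are the bookkeeping identification of the doubly-connected neighbors with $N_{out}(v) \cap N_{in}(v)$ and the appeal to irreflexivity to discard condition (ii) (note that this is simply the graph-theoretic restatement of Corollary~\ref{egyszerukov}).
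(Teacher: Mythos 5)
Your proposal is correct and follows exactly the route the paper intends: the paper derives Corollary~\ref{intergraphnecessary} precisely by setting $i=j$ (equivalently $u=v$) in Theorem~\ref{mainthm2}, with condition (i) holding trivially and condition (ii) failing by irreflexivity of $\Subset$, so the count $|N_{out}(v)\cap N_{in}(v)|$ must be even. Your identification of the doubly-connected neighbors with $N_{out}(v)\cap N_{in}(v)$ and the remark that this is the graph-theoretic form of Corollary~\ref{egyszerukov} are both accurate.
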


For example, the paired tree in Figure~\ref{interlacementgraphexample} is not realizable.

\begin{figure}[ht] 
\begin{center} 
%\psfrag{v}{$...$}
\epsfig{file=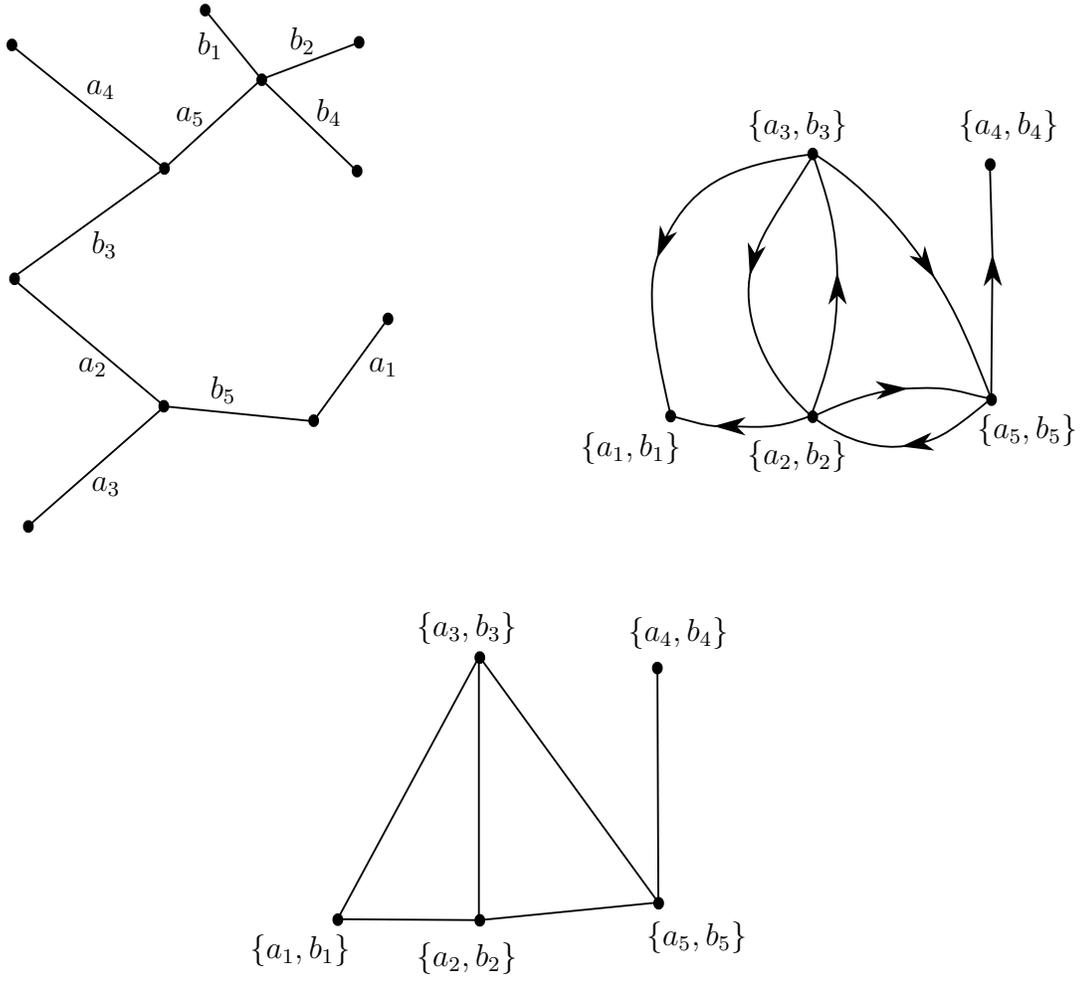, height=12.3cm} 
%\put(-345, 105){$b_3$}
%\put(-345, 15){$a_3$}
%\put(-350, 60){$a_2$}
%\put(-270, 182){$b_2$}
%\put(-347, 165){$a_4$}
%\put(-260, 155){$b_4$}
%\put(-313, 155){$a_5$}
%\put(-300, 50){$b_5$}
%\put(-305, 180){$b_1$}
%\put(-240, 60){$a_1$}
%\put(-160, 30){$\{ a_1, b_1 \}$}
%\put(-97, 25){$\{ a_2, b_2 \}$}
%\put(-10, 35){$\{ a_5, b_5 \}$}
%\put(-97, 150){$\{ a_3, b_3 \}$}
%\put(-17, 150){$\{ a_4, b_4 \}$}
\put(-345, 255){$b_3$}
\put(-345, 165){$a_3$}
\put(-350, 210){$a_2$}
\put(-270, 332){$b_2$}
\put(-347, 315){$a_4$}
\put(-260, 305){$b_4$}
\put(-313, 305){$a_5$}
\put(-300, 200){$b_5$}
\put(-305, 330){$b_1$}
\put(-240, 210){$a_1$}
\put(-160, 178){$\{ a_1, b_1 \}$}
\put(-97, 175){$\{ a_2, b_2 \}$}
\put(-10, 185){$\{ a_5, b_5 \}$}
\put(-97, 300){$\{ a_3, b_3 \}$}
\put(-17, 300){$\{ a_4, b_4 \}$}
\put(-285, -12){$\{ a_1, b_1 \}$}
\put(-222, -15){$\{ a_2, b_2 \}$}
\put(-135, -7){$\{ a_5, b_5 \}$}
\put(-222, 110){$\{ a_3, b_3 \}$}
\put(-142, 108){$\{ a_4, b_4 \}$}
\end{center} 
\caption{{A paired tree, its interlacement graph on the right hand side and finally its linking graph. 
The vertices of the interlacement graph and linking graph
correspond to the edge pairs of the tree.
By Corollary~\ref{intergraphnecessary} this paired tree is not realizable as we can see immediately from its interlacement graph.}}  
\label{interlacementgraphexample}  
\end{figure}

\section{Proof of Main theorem}%Theorem~\ref{mainthm}}
 
\begin{defn}[Incidence matrix]
A directed  graph
$G$
with  vertices $p_1, \ldots, p_m$ 
 determines an $m \x m$ incidence matrix over $\Z_2$ as usual: let $L \in \Z_2^{m \x m}$ be the matrix %containing only $0$ and $1$
such that the element $[L]_{i, j}$ in the $i$-th row and $j$-th column of $L$ is
\begin{itemize}
\item
equal to $1$ if
there is a directed edge of $G$ going from $p_j$ to $p_i$ and
\item
equal to $0$ in any other case.
\end{itemize}
\end{defn}

We have the following very important result about the incidence matrix of the interlacement graph of a paired tree.
For a vector space $\Z_2^r$ 
let $\langle \cdot, \cdot \rangle \co \Z_2^{r} \x \Z_2^{r} \to \Z_2$ denote the standard scalar product.

\begin{thm}\label{condbipartite}
Let  $L^{k \x k}$ be
the incidence matrix of  the interlacement graph $G_0$ of
a paired tree $T_0$ with edge pairs $\{ a_1, b_1 \}, \ldots, \{ a_k, b_k \}$. 
Performing double switches along all the edge pairs 
$\{ a_1, b_1 \}, \ldots, \{ a_k, b_k \}$ successively gives
the paired trees $T_1, \ldots, T_k$ and the corresponding interlacement graphs $G_1, \ldots, G_k$.
 Then
the interlacement graph $G_k$ of the  paired tree $T_k$ 
is bipartite if and only if 
there exists a vector $A \in \Z_2^{k}$ such that
$$
\langle Lu, L^Tv \rangle = \langle Lu, v \rangle ( 1 + \langle A, u+ v \rangle )
$$
holds for any standard basis vectors $u, v \in \Z_2^{k}$.
\end{thm}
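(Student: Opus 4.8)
The plan is to understand how the incidence matrix of the interlacement graph changes under a single double switch, then iterate over all $k$ edge pairs. From the paper's announced Proposition~\ref{switchneigh}, one double switch along an edge pair corresponds to a directed local complementation at the associated vertex, which adds one new vertex. The key observation I would exploit is that local complementation, as defined in the excerpt, has a clean linear-algebraic description: it is a \emph{basis change} in a $\Z_2$-vector space governed by the bilinear form attached to $L$. Since the four conditions (i)--(iv) defining local complementation describe exactly how neighborhoods are transferred to a duplicated vertex $q$ and how edges between in-neighbors and out-neighbors of $p$ get toggled, I expect the matrix of $G_k$ (after all $k$ switches) to be expressible through $L$, $L^T$, and products thereof over $\Z_2$. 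The first step, therefore, is to translate (iv) --- the toggling rule ``$u \to p \to v$ forces complementation of the $u \to v$ edge'' --- into the statement that the off-diagonal entry $[L']_{u,v}$ differs from $[L]_{u,v}$ precisely by the product $[L]_{p,u}[L]_{v,p}$, i.e.\ by a rank-one update built from the $p$-th row and column of $L$.

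Next I would accumulate these rank-one updates across the sequence of switches $T_0 \to T_1 \to \cdots \to T_k$. Because each switch duplicates a vertex, the natural bookkeeping is to track the $\Z_2$-bilinear form $(x,y) \mapsto \langle Lx, y\rangle$ on $\Z_2^k$ and see how it pushes forward. The central algebraic identity to establish is that bipartiteness of $G_k$ is equivalent to the existence of a proper $2$-coloring, and a $2$-coloring is encoded by a vector $A \in \Z_2^k$ assigning a color bit to each vertex. The condition that $A$ be a valid bipartition of $G_k$ says that whenever two vertices are adjacent in $G_k$ they receive opposite colors. So the middle step is to write out the adjacency relation of $G_k$ in terms of $L$ and show that ``$u,v$ adjacent in $G_k$'' translates into the scalar identity relating $\langle Lu, L^T v\rangle$ and $\langle Lu, v\rangle$. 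Here $\langle Lu, v\rangle = [L]_{?}$ reads off a single entry of $L$ (whether there is an edge in $G_0$), while $\langle Lu, L^T v\rangle = \langle L^T L u, v\rangle$ counts, modulo $2$, the common vertices in $N_{out}(u)\cap N_{in}(v)$ --- this is exactly the quantity appearing in Theorem~\ref{mainthm2}. Matching these against the coloring condition $(1 + \langle A, u+v\rangle)$, which is $0$ exactly when $u,v$ are colored oppositely, should yield the stated equivalence.

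Concretely, for standard basis vectors $u = e_i$ and $v = e_j$, the product $\langle Lu, L^T v\rangle$ equals the $(i,j)$ entry of $L^T L = L L^T$ interpreted correctly over $\Z_2$, counting paths $i \to m \to j$ through a common neighbor; $\langle Lu, v\rangle$ equals $[L]_{j,i}$, the indicator of an edge; and $1 + \langle A, u+v\rangle = 1 + A_i + A_j$ is $1$ iff $i,j$ share a color. Thus the displayed identity, read entrywise, asserts: the number of common ``out-into-in'' neighbors of $i$ and $j$ is odd precisely when there is an edge $i \Subset j$ \emph{and} $i,j$ lie in the same color class --- which is the bipartition condition of Theorem~\ref{mainthm2} applied to $G_0$. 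The final step is to invoke the already-cited relationship (Proposition~\ref{switchneigh} together with the local-complementation analysis) that bipartiteness of $G_k$ is equivalent to this entrywise condition holding on $G_0$, completing the chain.

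\textbf{Main obstacle.} I expect the principal difficulty to be the non-symmetry of $L$: unlike the undirected setting of \cite{FO99}, where the incidence matrix is symmetric and local complementation is a self-adjoint basis change, here $L \neq L^T$ and the two factors $Lu$ and $L^Tv$ genuinely differ. Consequently the bilinear form is not symmetric, and I cannot simply diagonalize or appeal to the standard symplectic/orthogonal machinery. The delicate point will be verifying that after $k$ successive double switches --- each duplicating a vertex and toggling edges according to the asymmetric rule (iv) --- the cumulative effect on the adjacency structure of $G_k$ is captured exactly by the product $L^T L$ (and not by some uncontrolled mixture of $L$, $L^T$, and lower-order correction terms). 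Controlling these cross terms, and confirming that the color vector $A$ can absorb precisely the diagonal contributions $\langle A, u+v\rangle$ without extra obstruction, is where the careful, switch-by-switch analysis promised in Section~4 of the paper will be essential; this is the step I would budget the most effort for.
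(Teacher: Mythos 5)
Your reading of the algebraic identity is correct: for basis vectors $u,v$ the quantity $\langle Lu, L^Tv\rangle$ counts $|N_{out}(u)\cap N_{in}(v)|$ mod $2$, $\langle Lu,v\rangle$ is the edge indicator, and $1+\langle A,u+v\rangle$ is the same-color indicator, so the displayed equation is exactly the bipartition condition of Theorem~\ref{mainthm2} read off on $G_0$. But the proof has a genuine gap at its load-bearing step. Your ``final step'' is to \emph{invoke} that bipartiteness of $G_k$ is equivalent to this entrywise condition on $G_0$ --- yet that equivalence \emph{is} Theorem~\ref{condbipartite}; Proposition~\ref{switchneigh} only tells you how the incidence matrix changes under a \emph{single} double switch, and nothing you have written bridges from there to the terminal graph $G_k$. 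As stated, the argument is circular.

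What is missing is the actual mechanism, which consists of two ideas, neither of which appears in your plan. First, the identity $\langle M x, M^T y\rangle=\langle Mx,y\rangle(1+\langle A,x+y\rangle)$ is an \emph{invariant of the process in both directions}, but only if the vector $A$ is allowed to transform at each step (in the paper, $A\mapsto A+S(M_i)p_{i+1}+\langle A,p_{i+1}\rangle q_{i+1}$, where $S(M)$ records pairs of oppositely directed edges); establishing this preservation is the technical heart of the proof (Proposition~\ref{inductionlemma} and the two lemmas behind it), and it cannot be reduced to saying that $M_k$ is ``captured exactly by the product $L^TL$'' --- the updates are quadratic in the entries of $M_i$ and $M_k$ admits no such closed form. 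Your remark that $A$ should ``absorb the diagonal contributions without extra obstruction'' glosses over precisely this computation, which is also where the asymmetry $L\neq L^T$ bites (note, incidentally, that $\langle Lu,L^Tv\rangle=\langle L^2u,v\rangle$, not $\langle L^TLu,v\rangle$). Second, you never use the special structure of the \emph{terminal} graph $G_k$: after all $k$ double switches every vertex has a twin with identical in- and out-neighborhoods, so $\langle M_ku,M_k^Tv\rangle=0$ identically; only because of this does the preserved identity at stage $k$ collapse to $\langle M_ku,v\rangle(1+\langle A_k,u+v\rangle)=0$, i.e.\ to the statement that $A_k$ is a proper $2$-coloring of $G_k$. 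Without both of these ingredients the chain from the hypothesis on $L$ to bipartiteness of $G_k$ (and back) is not closed.
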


The proof of this theorem will be presented later in Section~4 on page 17.
Nevertheless by using this result and \cite[Theorem~1]{Li04}, now we can prove our main theorem:
\begin{proof}[Proof of Theorem~\ref{mainthm}]
By  \cite[Theorem~1]{Li04} it is easy to see that the interlacement graph $G_k$ is bipartite if and only if the tree $T_0$ is
realizable. On the other hand notice that 
for a basis vector $u$ representing a vertex $p$ the vector $Lu$ shows 
for  which vertices $q$ there are oriented edges from $p$ to $q$.
Similarly $L^Tu$ shows the oriented edges into $p$.
So $\langle Lu, L^Tv \rangle$ counts the vertices which are out-neighbors of $u$ and in-neighbors of $v$.
Also notice that the vector $A$ accounts for the bipartition.
Hence the algebraic condition in the statement of Theorem~\ref{condbipartite} is clearly equivalent to the condition about the circle pairs in 
Theorem~\ref{mainthm}.
(Or equivalently we could say the same thing about Theorem~\ref{mainthm2}.)
\end{proof}

%\begin{cor}
%The directed linking graph of the resulting paired tree $T_k$
%is bipartite if and only if
%...
%\end{cor}

\section{The effect of the double switch operation and local complementation}

In this section, we prove Theorem~\ref{condbipartite} but at first
we describe in detail how double switch works.

If $T_0$ is a paired tree with edge  pairs $\{ \{ a_i, b_i \} : 1 \leq i \leq k \}$,
then its edges $a_2, b_2, \ldots, a_k, b_k$ are naturally identified with edges of the paired tree  $T_1$
 obtained by double switch along $\{ a_1, b_1 \}$ as it was explained in the Introduction. 
 We could identify $\{ a_1, b_1 \}$ with $\{ a_1', b_1' \}$ or $\{ a_1'', b_1'' \}$ in $T_1$, let us 
 fix the convention that $\{ a_1, b_1 \}$ is  identified 
 with $\{ a_1', b_1' \}$. Clearly, the number of vertices of the interlacement graph $G_1$ of $T_1$ is one more than 
 that of the interlacement graph $G_0$ of $T_0$ and
  the vertices $p_1, \ldots, p_k$ 
 of  $G_0$ are identified with the corresponding vertices  of $G_1$.
 We denote the additional vertex of $G_1$ by $q_1$,
 it corresponds to the edge pair $\{ a_1'', b_1'' \}$ and it has
   exactly the same in- and out-neighbors 
 as $p_1$. 
 Similarly double switching successively along the next pairs of edges 
introduces the new vertices $q_2, \ldots, q_k$ into the new interlacement graphs
$G_2, \ldots, G_k$. The whole process also gives the corresponding new incidence matrices $L_1, \ldots, L_k$.

Since each of the $k$ double switches  along the  edge pairs increases the number of vertices of the interlacement graph 
and hence the size of the incidence matrix  by one,
it is convenient to put all the matrix $L_i \in \Z_2^{(k+i) \x (k+i)}$ into the upper left corner of a larger $2k \x 2k$ matrix denoted  by $M_i \in \Z_2^{2k \x 2k}$ 
and declare the other elements of this larger $M_i$ to be  zero.
The diagonal elements  of $M_i$ are all zero since the interlacement graph $G_i$ has no loops.

To prove Theorem~\ref{condbipartite}, we will need some more technical results.
Starting with a paired tree $T_0$, its linking graph $G_0$ with vertices $p_1, \ldots, p_k$ and the
sequence of double switch operations along the pairs of edges $\{a_1, b_1 \}, \ldots, \{ a_k, b_k \}$
 can be seen in a ``vector space language'' as follows.

The vector space $\Z_2^{2k}$  splits as $\Z_2^{2k} = \Z_2^k \oplus \Z_2^k$ and from now on we identify 
the standard basis of the first $\Z_2^k$ summand with the vertices $p_1, \ldots, p_k$ of $G_0$.
Likewise the second $\Z_2^k$ summand is preserved for the new 
vertices $q_1, \ldots q_k$ coming from the double switches later.
So after double switching along $\{a_1, b_1\}$ the new vertex $q_1$ enters into the picture, the edges of
the new interlacement graph $G_1$ are perhaps also different from that of $G_0$, and the new incidence matrix $M_1 \in \Z_2^{2k \x 2k}$ of $G_1$ 
will have perhaps one more non-zero row and column: the $(k+1)$th row and column represents
how the new vertex $q_1$ participates in $G_1$.

This implies that after $0 \leq i$ successive double switches along 
$p_1, \ldots, p_i$, where $i \leq k$, the upper left $(k+i) \x (k+i)$ submatrix of $M_i$
tells us how the edges of the interlacement graph $G_i$ connect the vertices $p_1, \ldots, p_k, q_1, \ldots, q_i$.
Therefore if $u$ is one of the first $k+i$ standard basis elements of  
$\Z_2^{2k}$, then the vector $M_i u$ represents the out-neighbors of the vertex (represented by) $u$ in $G_i$.

The following statement describes precisely how the incidence matrix changes under double switch of a tree.
%Let $\langle \cdot, \cdot \rangle \co \Z_2^{2k} \x \Z_2^{2k} \to \Z_2$ denote the standard scalar product on $\Z_2^{2k}$.

\begin{prop}\label{switchneigh}
Let  $k \geq 1$ and $0 \leq i \leq k-1$.
Let $T_i$ be a paired tree with $2(k+i)$ edges, $G_i$ be its interlacement graph with $k+i$ vertices and $M_i \in \Z_2^{2k \x 2k}$  be its
incidence matrix
as explained above.
The paired tree obtained after double switching $T_i$ along the pair of edges $\{ a_{i+1}, b_{i+1} \}$
is denoted by $T_{i+1}$, its interlacement graph is denoted by $G_{i+1}$.
% Double switch $T_i$ along the pair of edges $\{ a_{i+1}, b_{i+1} \}$, denote the resulting tree by $T_{i+1}$, its interlacement graph by $G_{i+1}$. 
If $p_{i+1}$ denotes the vertex of $G_i$ and $G_{i+1}$ corresponding to $\{ a_{i+1}, b_{i+1} \}$
and $q_{i+1}$ denotes the new vertex of $G_{i+1}$, then 
\begin{enumerate}[\rm (1)]
\item
the incidence matrix $M_{i+1} \in \Z_2^{2k \x 2k}$  of $T_{i+1}$ is obtained from $M_i$  by 
\begin{itemize}
\item
$M_{i+1}u = M_i u + \langle M_i u, p_{i+1} \rangle \left( (M_i p_{i+1} +q_{i+1}) + \langle u, M_i p_{i+1} \rangle u \right)$, if  $u$ 
is a standard basis element representing a vertex of $G_{i+1} - q_{i+1}$ and
\item
$M_{i+1} q_{i+1} = M_i p_{i+1}$, 
\end{itemize}
\item
the incidence matrix $M_{i}$  is obtained from $M_{i+1}$  by
\begin{itemize}
\item
$M_i u = M_{i+1} u + \langle M_{i+1} u, p_{i+1} \rangle \left( (M_{i+1} p_{i+1} +q_{i+1}) + \langle u, M_{i+1} p_{i+1} \rangle u \right)$, if  $u$ 
is a standard basis element representing a vertex of $G_{i}$ and
\item
$M_i q_{i+1} = 0$.
\end{itemize}
\end{enumerate}
\end{prop}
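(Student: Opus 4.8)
The plan is to pass to the tree side, describe the double switch's effect on the edge-sets of all paths in the tree, and then read off the matrix identities; recall that $\mathbf c_j\Subset\mathbf c_l$ is the same as $\{a_j,b_j\}\sqsubset\{a_l,b_l\}$, i.e.\ that the unique path joining the edges $a_l$ and $b_l$ contains exactly one of $a_j,b_j$, and that for a basis vector $u=p_j$ the vector $M_iu$ records the out-neighbours of $p_j$ in $G_i$. Fix the pivot pair $\{a_{i+1},b_{i+1}\}$; let $P$ be the smallest path containing both, with endpoints $\al,\be$, and let $T_\al,T_\be$ be the (possibly empty) subtrees hanging off $\al$ and $\be$. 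Deleting $a_{i+1}$ and $b_{i+1}$ cuts $T_i$ into three pieces: $X$ (the edges of $T_\al$), $Z$ (the edges of $T_\be$), and a middle piece carrying the spine edges of $P$ strictly between $a_{i+1}$ and $b_{i+1}$ together with the subtrees hanging off the interior vertices of $P$. By the description of the double switch, $T_{i+1}$ is obtained by exchanging $X$ and $Z$ and subdividing $a_{i+1}$ into $a',a''$ and $b_{i+1}$ into $b',b''$ so that the spine reads $a',a'',\dots,b'',b'$ from $\al$ to $\be$, with $p_{i+1}=\{a',b'\}$ and $q_{i+1}=\{a'',b''\}$.

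First I would prove the change-of-path lemma, which is the technical core. Using that an edge $g$ lies on the path joining two edges if and only if deleting $g$ separates them, and splitting into cases according to where a test edge $e$ sits, I would show: if $e$ lies in $X$, in $Z$, or in one of the interior subtrees of $P$, then the property ``$e$ separates the two ends of a given path'' is unchanged by the switch, because the switch transports $X$ and $Z$ rigidly and fixes the interior; whereas if $e$ is a spine edge strictly between $a_{i+1}$ and $b_{i+1}$, this property flips, and it flips for all such $e$ at once, precisely when exactly one end of the path lies in $X\cup Z$. Adding the contributions of $a_j$ and $b_j$ modulo $2$, the relation $\{a_j,b_j\}\sqsubset\{a_l,b_l\}$ with $j,l\neq i+1$ changes exactly by the product of two parities: ``exactly one of $a_j,b_j$ is a middle spine edge'', which equals $\langle M_ip_j,p_{i+1}\rangle$ because the path joining $a_{i+1},b_{i+1}$ is $P$ itself; and ``exactly one of $a_l,b_l$ lies in $X\cup Z$'', which a short case check (over the number of $a_l,b_l$ in $X$ and in $Z$) identifies with $\langle M_ip_{i+1},p_l\rangle$, the indicator of $\mathbf c_{i+1}\Subset\mathbf c_l$. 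Hence the edge $p_j\to p_l$ is toggled if and only if $p_j\to p_{i+1}\to p_l$ is a directed path through the pivot, which is the directed local-complementation rule.

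Next I would dispatch the cases involving the pivot. The paths joining $a',b'$ and joining $a'',b''$ each contain exactly the spine edges of $P$, so $p_{i+1}$ and $q_{i+1}$ have the same in-neighbours, equal to those of $\{a_{i+1},b_{i+1}\}$ in $G_i$; symmetrically, since after the exchange $a'$ detects $Z$ and $b'$ detects $X$, the out-neighbours of $p_{i+1}$ and $q_{i+1}$ also coincide with those of $p_{i+1}$ in $G_i$, while the path joining $a',b'$ contains both $a''$ and $b''$ and the path joining $a'',b''$ contains neither $a'$ nor $b'$, so there is no edge between $p_{i+1}$ and $q_{i+1}$. In other words $q_{i+1}$ is a clone of $p_{i+1}$ and the pivot's own neighbourhoods are preserved. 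I then assemble these into formula~(1): the scalar $\langle M_iu,p_{i+1}\rangle$ turns the correction on exactly when $u\to p_{i+1}$; adding $M_ip_{i+1}$ performs the toggle of the out-neighbours, adding $q_{i+1}$ attaches $u$ to the clone, and the term $\langle u,M_ip_{i+1}\rangle u$ serves only to cancel the spurious diagonal entry (a forbidden loop) that $M_ip_{i+1}$ would introduce in coordinate $u$ when $p_{i+1}\to u$ as well. The identity $M_{i+1}q_{i+1}=M_ip_{i+1}$ is precisely the clone property.

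Finally, part~(2) follows formally. Since the pivot's in- and out-neighbourhoods are preserved we have $\langle M_{i+1}u,p_{i+1}\rangle=\langle M_iu,p_{i+1}\rangle$ and $M_{i+1}p_{i+1}=M_ip_{i+1}$; substituting these into the right-hand side of the formula in~(2) and inserting the expression for $M_{i+1}u$ from~(1) gives $M_iu$ plus twice a vector, hence $M_iu$ over $\Z_2$, and $M_iq_{i+1}=0$ holds because $q_{i+1}$ is not a vertex of $G_i$. I expect essentially all the difficulty to reside in the change-of-path lemma and in pinning down the two parities so that their product is exactly the through-pivot condition; once that combinatorics is in place, the translation into the matrix formula and the inversion in~(2) are short computations over $\Z_2$.
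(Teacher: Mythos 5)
Your proof is correct, and it arrives at the same intermediate description as the paper --- namely that $G_{i+1}$ is the directed local complementation of $G_i$ at $p_{i+1}$ together with a clone $q_{i+1}$, of which the two matrix formulas are a direct transcription --- but the combinatorial core is organized along a genuinely different route. The paper establishes the local-complementation rules (in particular its conditions (iii) and (iv)) by an exhaustive case analysis of the mutual configurations of the paths $\overline{xy}$, $\overline{uv}$ and $\overline{a_{i+1}b_{i+1}}$, spread over several figures. You instead isolate a single separation lemma for one test edge against one path: the predicate ``$e$ separates the two ends'' is invariant under the switch unless $e$ is a spine edge strictly between $a_{i+1}$ and $b_{i+1}$, in which case it flips exactly when one end of the path lies in $X\cup Z$. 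Summing over $e\in\{a_j,b_j\}$ modulo $2$ then expresses the toggle of $\{a_j,b_j\}\sqsubset\{a_l,b_l\}$ as the product of the two parities $\langle M_ip_j,p_{i+1}\rangle$ and $\langle M_ip_{i+1},p_l\rangle$, i.e.\ precisely the through-pivot condition of directed local complementation. This buys a uniform, figure-free argument covering all off-pivot pairs at once, at the price of having to justify the two parity identifications carefully --- which you do, including the small case check that ``exactly one of $a_l,b_l$ lies in $X\cup Z$'' coincides with $\mathbf c_{i+1}\Subset\mathbf c_l$. Your treatment of the pivot cases (preservation of the in- and out-neighbourhoods of $p_{i+1}$, the clone property of $q_{i+1}$, and the absence of an edge between them), your reading of the term $\langle u,M_ip_{i+1}\rangle u$ as cancelling the forbidden loop, and your formal derivation of part (2) by substitution over $\Z_2$ all check out against what the formulas require.
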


%
%\begin{rem}\noindent
%\begin{enumerate}[\rm (1)]
%\item
%In fact, this shows that in the linking graph $M$ the operation of double switching is the same as {\it oriented local complementation}
%at $p$ together with adding $p'$ with the same neighbourhood as that of $p$, cf.\ \cite[Section~2.2]{FO99}.
%\item
%Clearly for the inverse operation, i.e.\ going back from a double switched tree to the original one, we have to replace by zeros the 
%row and column corresponding to $p'$ of the linking matrix  and apply the same formulas to the rest (without the $p'$ terms).
%In other words, we have to apply the same formulas (including the $p'$ terms) but we set 
%$Mp' = 0$ the linking matrix $M$ of the original trees.
%In fact this is closely related to {\it pivoting} and Gauss elimination (as the formulas show too).
%\end{enumerate}
%\end{rem}

%
%\section{A proof of Rosenstiehl`s theorem}

%In this section, we give a proof for Rosenstiehl's theorem about Gauss codes,
%our argument will be especially useful to understand the proof of our Theorem~\ref{mainthm} and \ref{condbipartite}.
%In fact, following \cite{FO99} we will show 
%\begin{thm}
%Let $G$ be an unlabeled  graph and perform local complementations
%at its vertices in any order. Then the resulting graph is bipartite if and only if 
%there exists a vector $A \in 2^{|G|}$ such that
%$$\langle Nu, Nv \rangle = \langle Nu, v \rangle ( 1 + \langle u + v, A \rangle )$$
%for any vectors $u, v \in  2^{|G|}$, where $N$ denote the incidence matrix of $G$.
%\end{thm}
%\begin{proof}
%\end{proof}

\begin{rem}
The same formulas hold for the matrices $M_i^T$ instead of $M_i$.
\end{rem}

Now we prove Proposition~\ref{switchneigh}.

\begin{proof}[Proof of Proposition~\ref{switchneigh}]
We study how the interlacement graph $G_i$ is changing during the double switch of $T_i$ along $\{a_{i+1}, b_{i+1} \}$.
%The vertex in $G_i$ corresponding to  $\{a_{i+1}, b_{i+1} \}$ is $p_{i+1}$ and
% $G_{i+1}$ is the interlacement graph of the paired tree $T_{i+1}$ obtained by
%the double switch along $\{a_{i+1}, b_{i+1} \}$.
We show that $G_{i+1}$ 
 is  obtained from $G_i$ by   {directed} local complementation at $p_{i+1}$,
 where we add the vertex $q_{i+1}$ with the same neighborhood as that of $p_{i+1}$
 (see in the Introduction and   cf.\ \cite[Section~2.2]{FO99}).
 %, for example, Figure~\ref{localcompl})
 % together with adding 
  More precisely, denote by $NG_i(u)$ the subgraph of $G_i$ whose vertices are connected (or identical) to $u$. We show that
%\begin{figure}[ht] 
%\begin{center} 
%%\psfrag{v}{$...$}
%\epsfig{file=localcompl.eps, height=5cm} 
%%\put(-30, 144){$S^2$}
%\end{center} 
%\caption{{Local complementation at the central vertex of the graph on the left hand side 
%transforms it into the graph 
%%results the graph
% on the right hand side.}}
%\label{localcompl}  
%\end{figure}
\begin{enumerate}[(i)]
%\item
%the neighborhood in $G_{i+1}$ of $p_{i+1}$ is the same as in $G_i$,
%\item
%the neighborhood in $G_{i+1}$ of the new vertex $q_{i+1}$ is the same as that of $p_{i+1}$ in $G_i$,
%\item
%the neighborhoods in $G_{i+1}$ of all the vertices not connected by any edge to $p_{i+1}$   
%in $G_i$ are the same as in $G_i$ and
%\item
%for any two vertices $u, v$ in $G_{i+1}$ not equal to $p_{i+1}$ or $q_{i+1}$ if 
%$u$ has an edge going into $p_{i+1}$ and $p_{i+1}$  has an edge going into $v$, then
%$u$ has an edge going into $v$ in $G_{i+1}$ if and only if it has no such edge in $G_i$.
\item
$NG_{i+1}(p_{i+1})= NG_{i}(p_{i+1})$,
\item
$NG_{i+1}(q_{i+1}) = NG_{i}(p_{i+1})$,
\item
for any vertex $u$ of $G_i$ we have $NG_{i+1}(u) = NG_{i}(u)$ if $u$ 
is not connected by any edge to $p_{i+1}$   
in $G_i$ and
\item
for any two vertices $u, v$ in $G_{i+1}$ not equal to $p_{i+1}$ or $q_{i+1}$ if 
$u$ has an edge going into $p_{i+1}$ and $p_{i+1}$  has an edge going into $v$, then
$u$ has an edge going into $v$ in $G_{i+1}$ if and only if it has no such edge in $G_i$.
\end{enumerate}
%See, for example, Figure~\ref{localcompl}.
Note that the conditions  (1) and (2) in the statement of Proposition~\ref{switchneigh}
are clearly equivalent to (i)-(iv) if $M_i$ denote the corresponding incidence matrices so
it is enough to show that (i), (ii), (iii) and (iv) hold.

If $x, y$ are two edges of a tree, then let $\overline{xy}$ denote the path connecting (and containing) $x$ and $y$.
For two subgraphs $G, H$ of a graph, let $G - H$ denote the subgraph
of $G$ which has the same vertices as $G$ and has an edge if and only if $G$ has it but $H$ does not have it.
We also  introduce some other notations:
if both of $\{ a_j, b_j \} \sqsubset \{ a_i, b_i \}$ and $\{ a_j, b_j \} \sqsupset \{ a_i, b_i \}$ hold, then 
we will write $\{ a_j, b_j \} \parallel \{ a_i, b_i \}$. If none of $\{ a_j, b_j \} \sqsubset \{ a_i, b_i \}$ and $\{ a_j, b_j \} \sqsupset \{ a_i, b_i \}$ hold, then 
we will write $\{ a_j, b_j \} \sqsupset \sqsubset  \{ a_i, b_i \}$.

\begin{figure}[ht] 
\begin{center} 
%\psfrag{v}{$...$}
\epsfig{file=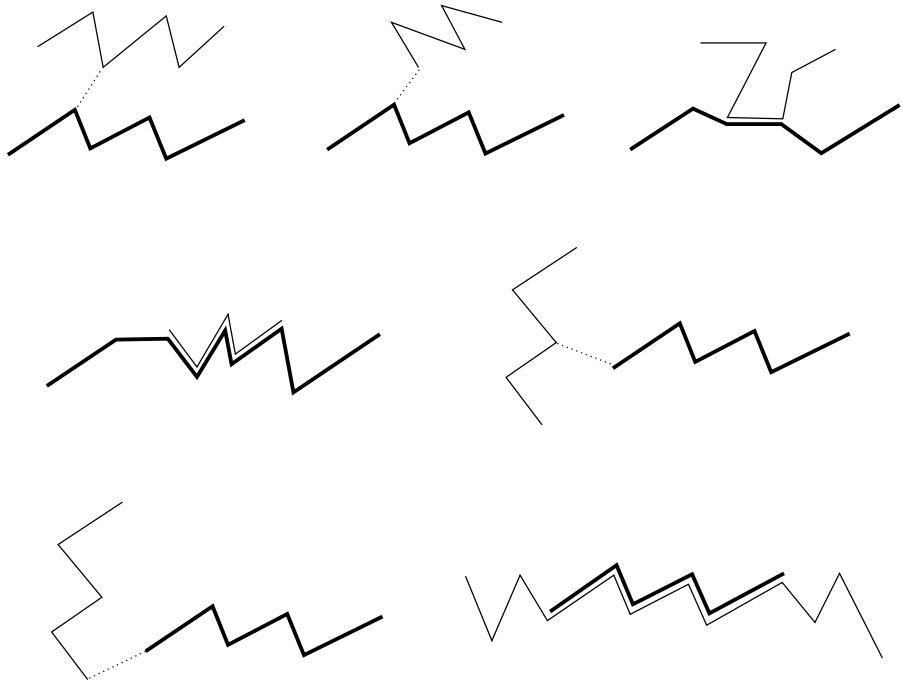, height=9cm} 
%\put(-30, 144){$S^2$}
\end{center} 
\caption{{The bold  zigzag symbolizes the path $\overline{a_{i+1}  b_{i+1}}$ and 
the thin zigzag symbolizes the path   $\overline{xy}$ in the tree $T_i$.
The dotted lines symbolize any paths (or just one vertex).
In the first six cases $\overline{xy}$ lies in one of the components of $T_i  - a_{i+1} - b_{i+1}$.}}
\label{ketut}  
\end{figure}

At first let us show that (i) holds.
Take any edge pair $\{x, y \}$ of $T_i$ different from $\{a_{i+1}, b_{i+1} \}$.
Suppose $\{x, y \} \sqsubset \{a_{i+1}, b_{i+1} \}$. This means that  exactly one of $x, y$ is in 
$\overline{a_{i+1} b_{i+1}}$. Clearly this property still holds after double switch along $\{a_{i+1}, b_{i+1} \}$.
Now suppose $\{x, y \} \sqsupset \{a_{i+1}, b_{i+1} \}$. This means
that exactly one of $a_{i+1}$ and $b_{i+1}$ is in $\overline{xy}$, say $a_{i+1}$.
Then after double switch, only $b_{i+1}$ will be in $\overline{xy}$.
So if a vertex of $G_i$ is connected to $p_{i+1}$, then it will be connected to $p_{i+1}$ in $G_{i+1}$ in the same way.
It follows that  if  $\{ x, y \} \sqsupset \sqsubset \{a_{i+1}, b_{i+1} \}$ in $T_i$, then $\{ x, y \} \sqsupset \sqsubset \{a_{i+1}, b_{i+1} \}$ in $T_{i+1}$ as well
because double switching twice along an edge pair keeps the paired tree 
  (except the new dividing vertices on the edges along which we double switch)
 and so the interlacement relation.
This argument shows that (i) holds.

Now it easy to see that (ii) holds because (i) holds and the new vertex $q_{i+1}$ comes from 
dividing the edges $a_{i+1}$ and $b_{i+1}$ as we explained in the Introduction.

To show (iii), take an edge pair $\{ x, y \}$ different from  $\{a_{i+1}, b_{i+1} \}$
such that  $\{ x, y \} \sqsupset \sqsubset \{a_{i+1}, b_{i+1} \}$ in $T_i$.
There are two possibilities: 
$\overline{xy}$ lies completely in one of the components of $T_i  - a_{i+1} - b_{i+1}$, or
$\overline{xy}$ contains both of $a_{i+1}, b_{i+1}$, see Figure~\ref{ketut}.

At first suppose that 
$\overline{xy}$ lies completely in one of the components of $T_i  - a_{i+1} - b_{i+1}$.
Then clearly for any edge pair $\{ u, v \}$ such that $\{ u, v \} \sqsubset \{ x, y \}$ in  $T_i$
we have the same in  $T_{i+1}$ as well.
On the other hand, if $\{ u, v \} \sqsupset \{ x, y \}$, then 
 assume $\overline{uv}$ contains $x$ and $\overline{uv}$ does not contain $y$.
 It is easy to see that in the first three cases and in the  fifth and sixth cases of Figure~\ref{ketut}
 the same holds after double switch and in the fourth case of Figure~\ref{ketut}
 after double switch still $\overline{uv}$ contains one of $x$ or $y$.  
%\begin{enumerate}
%\item
%If $\overline{uv}$ lies completely in one of the components of $T_i  - a_{i+1} - b_{i+1}$, then
%clearly $\{ u, v \} \sqsupset \{ x, y \}$ holds in $G_{i+1}$ as well.
%\item
%Both of $u, v$ cannot lie in other components than  the component of $x, y$, because
%if they were so, then $\{ u, v \}$ would not be linked with $\{ x, y \}$, see Figure~..., for example.
%\item
%If let's say only $u$ does not lie in the component of $x, y$, then
%there are two possibilities: $\overline{xy}$ is edge-disjoint from $\overline{a_{i+1} b_{i+1}}$ or not.
%\begin{enumerate}
%\item
%If it is edge-disjoint, then the smallest subgraph of $T_i$ 
%containing  $\overline{uv} - \overline {a_{i+1} b_{i+1} }$ and $x$ is a path, which does not change during double switch.
%As it can be seen easily, in this case $\{ u, v \} \sqsupset \{ x, y \}$ still holds in $G_{i+1}$, see Figure~..., for example.
%\item
%If $\overline {x y }$ and $ \overline {a_{i+1} b_{i+1} }$ are not edge-disjoint, then the above argument works if
%$x$ is not contained in $\overline {a_{i+1} b_{i+1} }$, see Figure~..., for example.
%And if $x$ is  contained in $\overline {a_{i+1} b_{i+1} }$, then so is $y$ and
%hence in $T_{i+1}$ we have $\{ u, v \} \sqsupset \{ x, y \}$, see Figure~..., for example.
%\end{enumerate}
%\end{enumerate}
If we  suppose that $\overline {x y }$ contains both of $a_{i+1}, b_{i+1}$ (this is the seventh case of  Figure~\ref{ketut}), then
 the argument is similar.
 On the other hand, if $\{ x, y \} \sqsupset \sqsubset \{ u, v \}$ in $T_i$, then $\{ x, y \} \sqsupset \sqsubset \{ u, v \}$ in $T_{i+1}$ as well
 since one more double switch along the same edge pair gives back $T_i$ (up to dividing the edges $a_{i+1}, b_{i+1}$).
So we proved (iii).

\begin{figure}[ht] 
\begin{center} 
%\psfrag{v}{$...$}
\epsfig{file=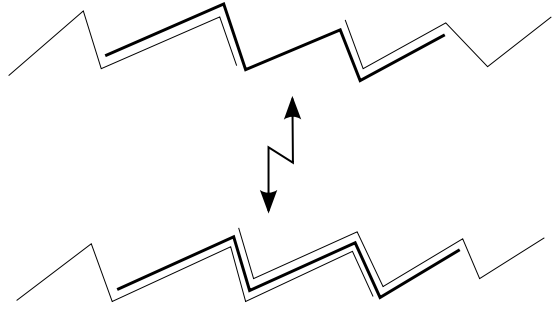, height=4cm} 
%\put(-30, 144){$S^2$}
\end{center} 
\caption{The path $\overline{a_{i+1}  b_{i+1}}$ is symbolized by the bold zigzag. 
The thin zigzags symbolize the paths   $\overline{xy}$ and $\overline{uv}$ in the tree $T_i$.}
\label{haromutparhuzamos}  
\end{figure}

Now let us show (iv).
Let $\{ u, v \}$ and $\{ x, y \}$ be different edge pairs in $T_i$, also different from $\{ a_{i+1}, b_{i+1} \}$. 
There are some cases depending on how they are related to 
$\{ a_{i+1}, b_{i+1} \}$ and to each other.

%\begin{figure}[ht] 
%\begin{center} 
%%\psfrag{v}{$...$}
%\epsfig{file=haromut.eps, height=8cm} 
%%\put(-30, 144){$S^2$}
%\end{center} 
%\caption{The bold (possibly dashed) zigzag symbolizes the path $\overline{a_{i+1}  b_{i+1}}$ and 
%the thin zigzags symbolize the paths   $\overline{xy}$ and $\overline{uv}$ in the tree $T_i$.}
%\label{haromut}  
%\end{figure}

\begin{enumerate}
\item
If in $T_i$ the relations  $\{ u, v \} \sqsubset \{ a_{i+1}, b_{i+1} \}$ and $\{ x, y \} \sqsubset \{ a_{i+1}, b_{i+1} \}$ both hold
but not $\{ u, v \} \sqsupset \{ a_{i+1}, b_{i+1} \}$ and not  $\{ x, y \} \sqsupset \{ a_{i+1}, b_{i+1} \}$, then
$\{ u, v \}$ and $\{ x, y \}$ lie in the component of $T_i - a_{i+1} - b_{i+1}$ containing the inner vertices of $\overline {a_{i+1} b_{i+1} }$.
In this case clearly $\{ u, v \}$ and $\{ x, y \}$ are related  in $T_{i+1}$ as they are in $T_i$.
\item
If $\{ u, v \} \sqsupset \{ a_{i+1}, b_{i+1} \}$ and $\{ x, y \} \sqsupset \{ a_{i+1}, b_{i+1} \}$ 
but not $\{ u, v \} \sqsubset \{ a_{i+1}, b_{i+1} \}$ and not  $\{ x, y \} \sqsubset \{ a_{i+1}, b_{i+1} \}$ in $T_i$, then
assume $u$ and $x$ are in the component of $T_i - a_{i+1} - b_{i+1}$ containing the inner vertices of $\overline {a_{i+1} b_{i+1} }$.
Then $v$ and $y$ are in the other components of $T_i - a_{i+1} - b_{i+1}$, which will be  interchanged during double switch.
Of course $u$ and $x$ are not in $\overline {a_{i+1} b_{i+1} }$. These imply that 
 $\{ u, v \}$ and $\{ x, y \}$ are related  in $T_{i+1}$ the same way as in $T_i$.
\item
If $\{ u, v \} \parallel \{ a_{i+1}, b_{i+1} \}$ and $\{ x, y \} \parallel \{ a_{i+1}, b_{i+1} \}$ in $T_i$, then 
$\{ u, v \} \parallel \{ x, y \}$ or $\{ u, v \}   \sqsupset  \sqsubset  \{ x, y \}$ must hold in $T_i$,
see Figure~\ref{haromutparhuzamos}.
In this case the relation between $\{ u, v \}$ and $\{ x, y \}$ in the new paired tree $T_{i+1}$
changes as we claim:
$\{ u, v \} \parallel \{ x, y \}$  (resp.\ $\{ u, v \}   \sqsupset  \sqsubset  \{ x, y \}$) in $T_{i+1}$ if and only if 
$\{ u, v \}   \sqsupset  \sqsubset  \{ x, y \}$ (resp.\  $\{ u, v \} \parallel \{ x, y \}$) in $T_i$. 
See Figure~\ref{haromutparhuzamos}.
%\begin{figure}[ht] 
%\begin{center} 
%%\psfrag{v}{$...$}
%\epsfig{file=haromutparhuzamos.eps, height=3cm} 
%%\put(-30, 144){$S^2$}
%\end{center} 
%\caption{The bold (possibly dashed) zigzag symbolizes the path $\overline{a_{i+1}  b_{i+1}}$ and 
%the thin zigzags symbolize the paths   $\overline{xy}$ and $\overline{uv}$ in the tree $T_i$.}
%\label{haromutparhuzamos}  
%\end{figure}
\item
Or else, if $\{ u, v \} \parallel \{ a_{i+1}, b_{i+1} \}$ and $\{ x, y \} \sqsubset \{ a_{i+1}, b_{i+1} \}$ or 
$\{ x, y \} \sqsupset \{ a_{i+1}, b_{i+1} \}$
in $T_i$, then the possible configurations can be
seen in  Figure~\ref{haromutegyikparhuzamos}.
It is easy to check that our claim holds, details are left to the reader.
\begin{figure}[ht] 
\begin{center} 
%\psfrag{v}{$...$}
\epsfig{file=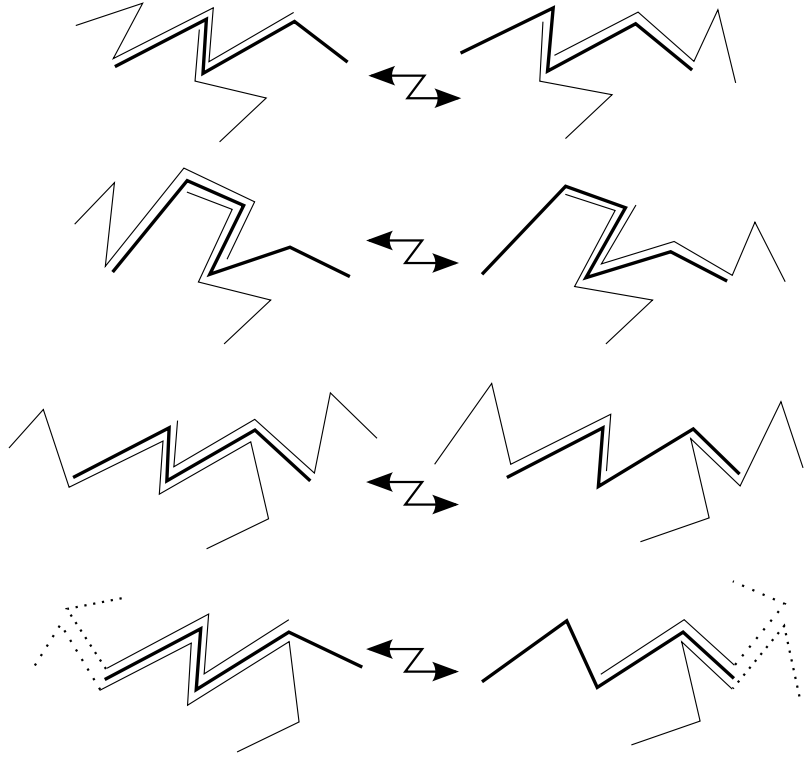, height=10cm} 
%\put(-30, 144){$S^2$}
\end{center} 
\caption{The bold zigzag symbolizes the path $\overline{a_{i+1}  b_{i+1}}$ and 
the thin (possibly dotted) zigzags symbolize the paths   $\overline{xy}$ and $\overline{uv}$ in the tree $T_i$. 
$\{ u, v \} \parallel \{ a_{i+1}, b_{i+1} \}$ holds in any cases, $\{ x, y \} \sqsubset \{ a_{i+1}, b_{i+1} \}$ holds
in the first four and
$\{ x, y \} \sqsupset \{ a_{i+1}, b_{i+1} \}$ holds in the last four cases.
The relations between $\{ u, v \}$ and $\{ x, y \}$ in the first six cases are the following:
$\{ u, v \} \sqsupset \{ x, y \}$, $\{ u, v \} \sqsupset \sqsubset \{ x, y \}$,
$\{ u, v \} \parallel \{ x, y \}$, $\{ u, v \} \sqsubset \{ x, y \}$,
$\{ u, v \} \sqsubset \{ x, y \}$, $\{ u, v \} \sqsupset \sqsubset \{ x, y \}$.}
\label{haromutegyikparhuzamos}  
\end{figure}
\item
Finally, if none of the above cases hold but
$\{ u, v \} \sqsupset \{ a_{i+1}, b_{i+1} \}$ and $\{ x, y \} \sqsubset \{ a_{i+1}, b_{i+1} \}$, then 
see Figure~\ref{haromutnemparh}.
\begin{figure}[ht] 
\begin{center} 
%\psfrag{v}{$...$}
\epsfig{file=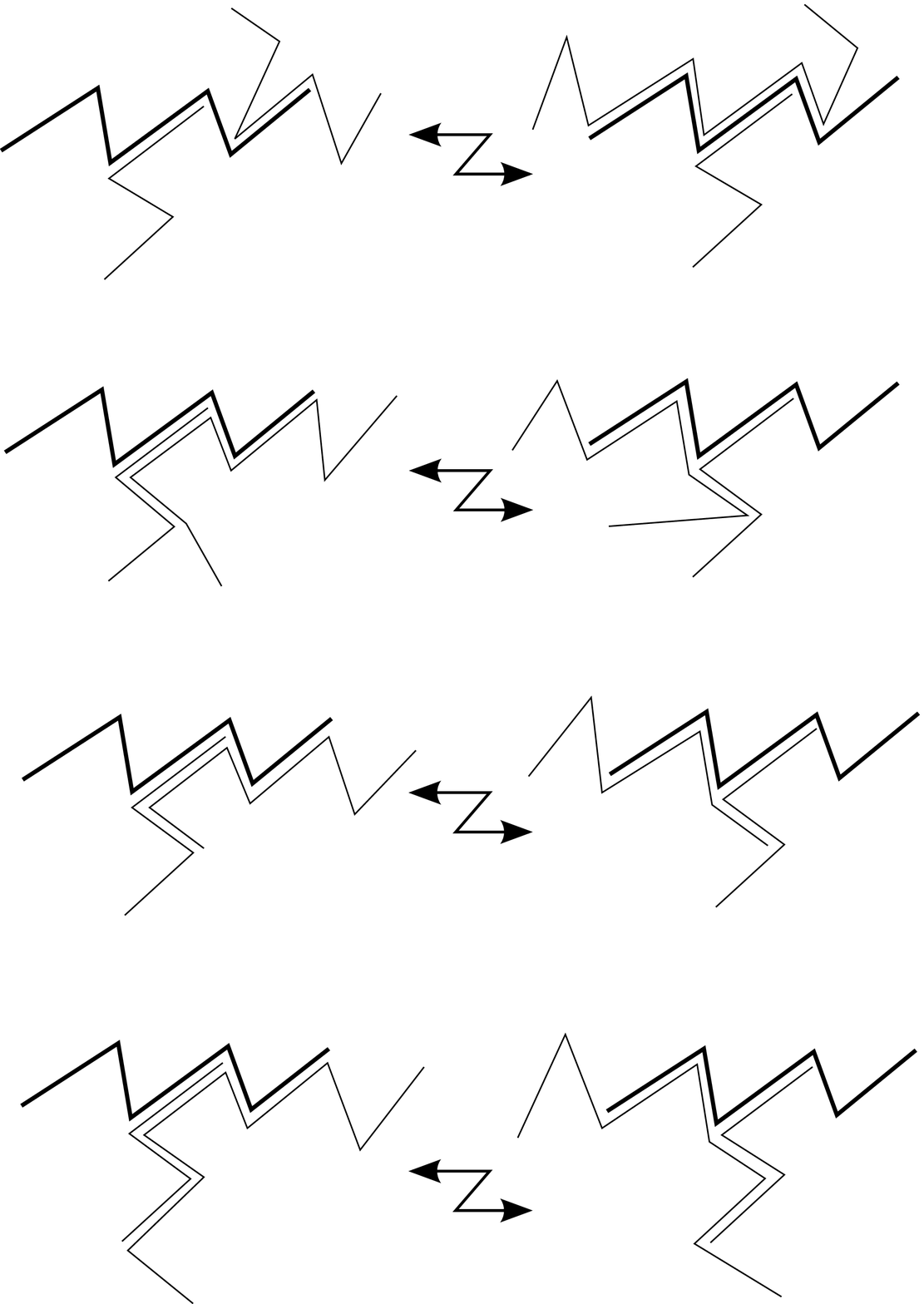, height=13cm} 
%\put(-30, 144){$S^2$}
\end{center} 
\caption{The bold  zigzag symbolizes the path $\overline{a_{i+1}  b_{i+1}}$ and 
the thin zigzags symbolize the paths   $\overline{xy}$ and $\overline{uv}$ in the tree $T_i$. 
$\{ u, v \} \sqsupset \{ a_{i+1}, b_{i+1} \}$ and $\{ x, y \} \sqsubset \{ a_{i+1}, b_{i+1} \}$ hold in all cases.}
%The relations between $\{ u, v \}$ and $\{ x, y \}$ in the first six cases are the following:
%$\{ u, v \} \sqsupset \{ x, y \}$, $\{ u, v \} \sqsupset \sqsubset \{ x, y \}$,
%$\{ u, v \} \parallel \{ x, y \}$, $\{ u, v \} \sqsubset \{ x, y \}$,
%$\{ u, v \} \sqsubset \{ x, y \}$, $\{ u, v \} \sqsupset \sqsubset \{ x, y \}$.}
\label{haromutnemparh}  
\end{figure}
\end{enumerate}

\end{proof}

To prove Theorem~\ref{condbipartite} we will need the following important proposition. 
Let $\mathcal B (\Z_2^r)$ denote the set of standard basis elements of a vector space $\Z_2^r$.
%For a matrix $M \in \Z_2^{r \x r}$ let
%$S(M) \in \Z_2^{r \x r}$ be the matrix defined by
%$\langle S(M) x, y \rangle = \langle Mx, y \rangle \langle My, x \rangle$, where
%$x, y \in \Z_2^r$ are elements of the standard basis.
%Clearly we have $S(M) = S(M)^T = S(M^T)$. 
%%Of course $\langle S(M) a, b \rangle = \langle Ma, b \rangle \langle Mb, a \rangle$ does not hold 
%%necessarily for any vectors $a, b \in \Z_2^r$.

%Suppose the diagonal of $M$ is fully zero. 
%%and $p, p' \in  \Z_2^{2k}$ are such that $Mp = Mp'$.
\begin{prop}\label{inductionlemma}
Let $0 \leq i \leq k$ be fixed and 
let $A \in \Z_2^{2k}$ be a fixed vector.
If for any $x, y \in \mathcal B (\Z_2^{2k})$  we have 
$$\langle M_i x, M^T_i y \rangle
= \langle M_{i} x, y \rangle (1+ \langle A, x + y \rangle),$$
then there exist vectors $B, C \in  \Z_2^{2k}$ such that
we have for any  $u, v \in \mathcal B (\Z_2^{2k})$
\begin{enumerate}[\rm (1)]
\item
\begin{enumerate}[\rm (i)]
\item 
 the $k+i+2, \ldots, 2k$-th coordinates of $B$ are all zero and 
 \item
 $%\begin{multline}\label{masodik}
\langle M_{i+1} u  , M^T_{i+1} v \rangle = 
\langle M_{i+1} u , v \rangle  ( 1 +  \langle B, u  + v \rangle  ) 
$%\end{multline} 
\end{enumerate}
if $0 \leq i \leq k-1$ and the $k+i+1, \ldots, 2k$-th coordinates of $A$ are all zero, and  
\item
$$
\langle M_{i-1} u  , M^T_{i-1} v \rangle = 
\langle M_{i-1} u , v \rangle  ( 1 +  \langle C, u  + v \rangle  )$$
if $1 \leq i \leq k$.
\end{enumerate}
\end{prop}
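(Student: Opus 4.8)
The plan is to prove both parts by direct computation from the explicit update formulas of Proposition~\ref{switchneigh}, exploiting that the two updates share the same algebraic form on generic rows. Writing $p=p_{i+1}$, $q=q_{i+1}$ and $(M,M')=(M_i,M_{i+1})$ for part~(1), and $p=p_i$, $q=q_i$ and $(M,M')=(M_i,M_{i-1})$ for part~(2) (applying Proposition~\ref{switchneigh} with its index lowered by one), both updates read, on any standard basis vector $u$ representing a genuine vertex,
$$M' u = M u + \langle M u, p \rangle \big( (M p + q) + \langle u, M p \rangle\, u \big),$$
and they differ only on the distinguished row: case~(1) sets $M'q=Mp$ (the new vertex inherits the neighborhood of $p$), whereas case~(2) sets $M'q=0$ (the vertex $q$ is deactivated). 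Thus it suffices to run one computation for a generic update of this shape and then inspect the single row $u=q$ and column $v=q$ separately in each case. For the transposed matrix, the remark after Proposition~\ref{switchneigh} gives the identical formula for $M^T$; since $\langle M^T v, p \rangle = \langle v, M p \rangle$ and $\langle v, M^T p \rangle = \langle M v, p \rangle$, the two edge-indicator coefficients $\langle Mu,p\rangle$ and $\langle u,Mp\rangle$ simply trade places in the transposed update.

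First I would record two auxiliary expansions. The scalar product $\langle M'u, v\rangle$ expands by $\Z_2$-bilinearity into $\langle Mu,v\rangle$ plus correction terms supported on the coordinates $p$, $q$ and $u$, and this will reassemble the factor $\langle M'u,v\rangle$ appearing on the right-hand side of the target identity. The main term $\langle M'u, M'^T v\rangle$ expands into $\langle Mu, M^T v\rangle$ together with a collection of cross terms, each being a product of one or two of the edge indicators $\langle Mu,p\rangle$, $\langle u,Mp\rangle$, $\langle Mv,p\rangle$, $\langle v,Mp\rangle$ with one of $\langle Mu, M^T v\rangle$, $\langle Mu, M^T p\rangle$, $\langle Mp, M^T v\rangle$, $\langle Mp, M^T p\rangle$, $\langle Mu, q\rangle$, $\langle q, M^T v\rangle$. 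Into every factor of the shape $\langle Mx, M^T y\rangle$ I would substitute the hypothesis $\langle Mx,M^Ty\rangle=\langle Mx,y\rangle(1+\langle A,x+y\rangle)$. Two simplifications keep this manageable: the interlacement graph has no loops, so $\langle Mp,p\rangle=0$ and hence $\langle Mp,M^Tp\rangle=\langle Mp,p\rangle(1+\langle A,0\rangle)=0$; and for standard basis vectors $\langle x,y\rangle=\delta_{xy}$, so most mixed $\langle A,\cdot\rangle$ factors collapse.

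After substitution and cancellation in characteristic $2$, the right-hand side should organize itself as $\langle M'u,v\rangle\,(1+\langle A,u+v\rangle)$ plus a remainder that is again a multiple of $\langle M'u,v\rangle$ by a linear expression in $u$ and $v$ whose coefficients are the coordinates of a single vector; that vector is the desired $B$ (respectively $C$), which one reads off directly. Concretely I expect $B$ to agree with $A$ off the two coordinates indexed by $p$ and $q$, the $q$-coordinate being newly switched on because the double switch activates the vertex $q_{i+1}$, which occupies coordinate $k+i+1$. The support claim~(1)(i) then follows: since $A$ vanishes in coordinates $k+i+1,\dots,2k$ by assumption and the update touches only the $p$- and $q$-coordinates (both of index $\le k+i+1$), the vector $B$ vanishes in coordinates $k+i+2,\dots,2k$. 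In part~(2) there is no support statement to verify, so $C$ is simply read off, with its $q$-coordinate dropped because the row $q$ is zeroed out.

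The bookkeeping of this expansion is where the real work lies, and I would treat with extra care the degenerate inputs: the row $u=q$ or column $v=q$, where the separate rule ($M'q=Mp$ in case~(1), $M'q=0$ and $Mq=0$ in case~(2)) must replace the generic formula; the cases $u=p$ or $v=p$, where some edge indicators vanish by the no-loop condition; and the diagonal case $u=v$. These special cases are precisely the ones that pin down the $p$- and $q$-coordinates of $B$ and $C$, so they cannot be waved away, and I expect them to account for the entire difference between $B$ (or $C$) and $A$. The main obstacle is verifying that the generic and degenerate computations agree on their overlap and that every cross term not absorbed into the factor $1+\langle B,u+v\rangle$ (respectively $1+\langle C,u+v\rangle$) cancels modulo $2$.
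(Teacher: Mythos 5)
Your strategy is the same as the paper's: expand $\langle M'u, M'^Tv\rangle$ using the common update formula of Proposition~\ref{switchneigh}, substitute the hypothesis into every factor of the form $\langle Mx,M^Ty\rangle$, and treat the rows and columns indexed by $q$ separately for the two cases $M'q=Mp$ and $M'q=0$; the paper packages exactly this computation into Lemma~\ref{elsolemma} (generic $u,v\neq q$) and Lemma~\ref{masodiklemma} (one of $u,v$ equal to $q$). There is, however, one concrete error in what you predict the computation will yield. You expect $B$ to agree with $A$ off the two coordinates indexed by $p$ and $q$; in fact the correct vector is $B=A+S(M_i)p_{i+1}+\langle A,p_{i+1}\rangle q_{i+1}$, where $\langle S(M_i)p_{i+1},y\rangle=\langle M_i p_{i+1},y\rangle\langle M_i y,p_{i+1}\rangle$. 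This correction is supported on \emph{all} vertices joined to $p_{i+1}$ by two oppositely directed edges (and it vanishes at $p_{i+1}$ itself), so it is generally nonzero far away from $\{p,q\}$ --- indeed this term is the mechanism behind Corollary~\ref{intergraphnecessary} and cannot be absorbed into a two-coordinate adjustment. Consequently your justification of (1)(i) (``the update touches only the $p$- and $q$-coordinates'') rests on a false premise; the claim is still true, but for a different reason: the rows and columns of $M_i$ with index greater than $k+i$ are zero, so $S(M_i)p_{i+1}$ is supported on the first $k+i$ coordinates, and $q_{i+1}$ occupies coordinate $k+i+1$. You would also need to make explicit the auxiliary identities the expansion silently uses, namely $\langle M^Tv,p\rangle\langle Mu,q\rangle+\langle Mu,p\rangle\langle M^Tv,q\rangle=0$ and $\langle Mp,q\rangle=\langle Mq,p\rangle=0$ (trivial in case (1) because the $q$-row and $q$-column of $M_i$ vanish, and in case (2) because the $p_i$- and $q_i$-rows and columns coincide), and the fact that $\langle A,q_{i+1}\rangle=0$ in case (1), which is precisely where the extra support hypothesis on $A$ enters. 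With these repairs the plan goes through and coincides with the paper's proof.
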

The proof of this proposition is quite technical and so it will be presented only in the next section.
Now we prove Theorem~\ref{condbipartite}.

\begin{proof}[Proof of Theorem~\ref{condbipartite}]
First suppose that 
there exists a vector $A \in \Z_2^{k}$ such that
$$
\langle Lu, L^Tv \rangle = \langle Lu, v \rangle ( 1 + \langle A, u+ v \rangle )
$$
holds for any $u, v \in \mathcal B (\Z_2^{k})$.
As explained at the beginning  of Section~4, we consider $L$ as a $2k \x 2k$ matrix $M_0$ and also 
the vector $A$ as a vector in $\Z_2^{2k}$ whose $k+1, \ldots, 2k$-th coordinates are all zero.
Then still
$$
\langle M_0 u, M_0 ^Tv \rangle = \langle M_0 u, v \rangle ( 1 + \langle A, u+ v \rangle )
$$
holds for any $u, v \in \mathcal B (\Z_2^{2k})$.
Then by applying Proposition~\ref{inductionlemma} (1) iteratively,
after performing the double switches 
the final incidence matrix $M_k$ of the tree $T_k$ satisfies the same type of equation with some vector $A_k \in \Z_2^{2k \x 2k}$ instead of $A$.
Clearly $A_k$ determines a bipartition of  the interlacement graph of the tree $T_k$ by the conditions
$\langle A_k, u \rangle = 0$ or $\langle A_k, u \rangle = 1$ for a vertex represented by $u$.

Since in the interlacement graph of the tree $T_k$ 
for every vertex the adjacent edges are duplicated due to the double switches, we have  $\langle M_k u, M_k^Tv \rangle = 0$
for any $u, v \in \mathcal B (\Z_2^{2k})$.
Then the equation
$$0 = \langle M_k u, M_k^Tv \rangle = \langle M_k u, v \rangle ( 1 + \langle A_k, u+ v \rangle )$$
 implies that if there is an edge going from $u$ to $v$, then $u$ and $v$ are in different classes of the partition. 
So the interlacement graph of $T_k$ is bipartite.

Now suppose that the interlacement graph of the tree $T_k$ is bipartite.
Then 
there exists a vector $A_k \in \Z_2^{2k}$ such that
$$
 \langle M_k u, v \rangle ( 1 + \langle A_k, u+ v \rangle ) = 0
$$
holds for any $u, v \in \mathcal B (\Z_2^{2k})$.
Since 
$\langle M_k u, M_k^Tv \rangle = 0$
for any $u, v \in \mathcal B (\Z_2^{2k})$, 
the equation $$\langle M_k u, M_k^Tv \rangle =  \langle M_k u, v \rangle ( 1 + \langle A_k, u+ v \rangle )$$
holds for any $u, v \in \mathcal B (\Z_2^{2k})$.
Hence  by Proposition~\ref{inductionlemma} (2),
the same type of equation holds for the  matrix $M_0$ and a vector $A_0 \in \Z_2^{2k}$ as well.
Then the  $k \x k$ upper left submatrix $L$ of $M_0$ and the vector $A$ obtained from $A_0$ by 
taking only the first $k$ coordinates satisfy 
$$\langle L u, L^Tv \rangle =  \langle L u, v \rangle ( 1 + \langle A, u+ v \rangle )$$
for any $u, v \in \mathcal B (\Z_2^{k})$.
\end{proof}

\section{Proof of Proposition~\ref{inductionlemma}}

At first we prove two lemmas.
Let $r \geq 1$ and 
let $\mathcal B ( \Z_2^r )$ denote the set of standard basis elements of $\Z^r_2$.
For a matrix $M \in \Z_2^{r \x r}$ let
$S(M) \in \Z_2^{r \x r}$ be the matrix defined by
$\langle S(M) x, y \rangle = \langle Mx, y \rangle \langle My, x \rangle$, where
$x, y \in \mathcal B ( \Z_2^r )$.
Clearly we have $S(M) = S(M)^T = S(M^T)$. 
%Of course $\langle S(M) a, b \rangle = \langle Ma, b \rangle \langle Mb, a \rangle$ does not hold 
%necessarily for any vectors $a, b \in \Z_2^r$.

Suppose the diagonal of $M$ is fully zero. 
Let $p, p' \in \mathcal B ( \Z_2^r )$ fixed such that 
$p \neq p'$.
For any $u \in \mathcal B ( \Z_2^r )$ 
such that $u \neq p'$ let
 $P(u) = \langle M u, p \rangle \left( (M p +p') + \langle u, M p \rangle u \right)$ and
 $P(u)^T = \langle M^T u, p \rangle \left( (M^T p +p') + \langle u, M^T p \rangle u \right)$.  
 (Compare these formulas with Proposition~\ref{switchneigh}.)
 Also suppose that
 \begin{equation}\label{seged}
 \langle M^T v, p \rangle \langle M u  ,   p' \rangle + \langle M u, p \rangle  \langle  M^T v , p'  \rangle =0
 \end{equation}
 for all $u, v \in \mathcal B ( \Z_2^r )$ and
 $M$ is such that 
\begin{equation}\label{seged1.5}
\langle M p, p' \rangle = \langle M p', p \rangle = 0.
\end{equation}
\begin{lem}\label{elsolemma}
Let $A \in \Z_2^{r}$ be a fixed vector.
If for any  $x, y \in \mathcal B ( \Z_2^r )$  we have 
$$\langle M x, M^T y \rangle
= \langle M x, y \rangle (1+ \langle A, x + y \rangle ),$$
then
 for any  $u, v \in \mathcal B ( \Z_2^r )$ such that $u, v \neq p'$, we have
\begin{multline}\label{elsomasodik}
\langle M u + P(u) , M^T v + {P(v)^T}  \rangle = 
\langle M u + P(u) , v \rangle  ( 1 +  \langle A + S(M)p, u  + v \rangle  ).
\end{multline}
\end{lem}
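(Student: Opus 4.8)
The plan is to expand both sides of \eqref{elsomasodik} as polynomials in a handful of $\Z_2$-scalars and to check, after invoking the three standing hypotheses, that the two sides agree term by term. Throughout I abbreviate the four ``edge indicators'' between $\{u,v\}$ and $p$ by
\[
\al=\langle Mu,p\rangle,\quad \be=\langle Mp,v\rangle=\langle M^Tv,p\rangle,\quad \ga=\langle Mp,u\rangle=\langle u,Mp\rangle,\quad \de=\langle Mv,p\rangle=\langle v,M^Tp\rangle,
\]
so that $P(u)=\al\bigl((Mp+p')+\ga u\bigr)$ and $P(v)^T=\be\bigl((M^Tp+p')+\de v\bigr)$; I also write $w=\langle Mu,v\rangle$ and $a_x=\langle A,x\rangle$, and recall that for standard basis vectors $\langle u,v\rangle$ is $1$ exactly when $u=v$.

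First I would expand the left-hand side into its four bilinear pieces $\langle Mu,M^Tv\rangle+\langle Mu,P(v)^T\rangle+\langle P(u),M^Tv\rangle+\langle P(u),P(v)^T\rangle$ and push every inner product of the shape $\langle Mx,M^Ty\rangle$, for $(x,y)\in\{(u,v),(u,p),(p,v),(p,p)\}$, through the lemma's hypothesis $\langle Mx,M^Ty\rangle=\langle Mx,y\rangle(1+\langle A,x+y\rangle)$; the case $(p,p)$ vanishes because the diagonal of $M$ is zero. This leaves, besides the main expressions in $\al,\be,\ga,\de,w$ and the $a_x$, exactly two uncontrolled ``stray'' terms, $\be\langle Mu,p'\rangle$ and $\al\langle M^Tv,p'\rangle$, arising from the $p'$-summands of $P(v)^T$ and $P(u)$. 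The decisive simplification is that these cancel, since \eqref{seged} is precisely $\be\langle Mu,p'\rangle+\al\langle M^Tv,p'\rangle=0$. In the cross term $\langle P(u),P(v)^T\rangle$ I would use $\langle Mp,p'\rangle=\langle Mp',p\rangle=0$ from \eqref{seged1.5}, the zero diagonal, and $u,v\neq p'$ to collapse it to $\al\be(1+\be\de+\al\ga+\ga\de\langle u,v\rangle)$. Collecting the $\al\be$-contributions and using the characteristic-two identity $(1+a_u+a_p)+(1+a_p+a_v)+1=1+a_u+a_v$, the whole left-hand side becomes
\[
w(1+a_u+a_v)+\al\be(1+a_u+a_v)+w\be\de+w\al\ga+\al\be\de+\al\be\ga+\al\be\ga\de\langle u,v\rangle.
\]

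In parallel I would expand the right-hand side $\langle Mu+P(u),v\rangle\,\bigl(1+\langle A+S(M)p,u+v\rangle\bigr)$. Here $\langle Mu+P(u),v\rangle=w+\al\be+\al\ga\langle u,v\rangle$, while from the definition of $S(M)$ together with $S(M)=S(M)^T$ one reads off $\langle S(M)p,u\rangle=\al\ga$ and $\langle S(M)p,v\rangle=\be\de$, so the scalar factor is $1+a_u+a_v+\al\ga+\be\de$. Multiplying out and simplifying in $\Z_2$ — with care on the diagonal, where $\langle u,v\rangle\neq0$ forces $a_u+a_v=0$ and $\al\ga=\be\de$, so that two diagonal pieces cancel and the remaining one matches $\al\be\ga\de\langle u,v\rangle$ — reproduces the displayed expression above exactly, which closes the proof.

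The one genuinely delicate point, and the only step beyond disciplined bookkeeping, is recognising that the two uncontrollable $p'$-terms are the literal content of \eqref{seged} and therefore annihilate each other; without \eqref{seged} the identity is false. The secondary pitfall is transpose discipline: one must never conflate $\langle Mp,v\rangle$ with $\langle Mv,p\rangle$, since $\be$ and $\de$ genuinely play different roles, and one must treat the $u=v$ contributions coming from $\langle P(u),P(v)^T\rangle$ and from the $S(M)p$-correction consistently.
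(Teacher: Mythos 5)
Your proposal is correct and follows essentially the same route as the paper's proof: expand all four bilinear pieces, convert every $\langle Mx,M^Ty\rangle$ via the hypothesis, cancel the two stray $p'$-terms using \eqref{seged}, and collapse $\langle P(u),P(v)^T\rangle$ using \eqref{seged1.5} and the zero diagonal. The only (cosmetic) difference is that you keep $\langle u,v\rangle$ symbolic and verify the identity in one pass, whereas the paper first subtracts $\langle Mu,M^Tv\rangle$ from both sides and then splits into the cases $u\neq v$ and $u=v$.
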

\begin{proof}
Let %$M$ denote $M_j$, $P(\cdot)$ denote $P_j(\cdot)$ and 
$S$ denote $S(M)$ for short.
%We have to show that
%\begin{multline*}
%\langle M u + \langle M u, p \rangle p'+ 
%\langle  M u, p \rangle M p + \langle Sp, u\rangle u, \\
% M^T v + \langle  M^T v, p \rangle p'+
%\langle M^T v, p \rangle  M^T p + \langle Sp, v\rangle v  \rangle = \\
%\langle M u +P(u),  v \rangle 
% ( 1 +  \langle A + Sp, u  + v \rangle  ).
%\end{multline*}
The right hand side of (\ref{elsomasodik}) is equal to 
\begin{multline*}
\langle Mu, v \rangle ( 1 + \langle A, u +v \rangle  ) + 
\langle Mu, v \rangle  \langle Sp, u  + v \rangle  + 
\langle P(u), v \rangle  ( 1 +  \langle A + Sp, u  + v \rangle  ).
\end{multline*}

By subtracting $\langle M u, M^T v \rangle =\langle Mu, v \rangle ( 1 + \langle A, u +v \rangle )$ from both sides 
of (\ref{elsomasodik}),
it is enough to show that
\begin{multline}\label{masodiklepes}
\langle M u  ,  {P^T(v)}  \rangle + \langle P(u) ,  M^T  v    \rangle
+ \langle P(u), P(v)^T \rangle = \\
\langle Mu, v \rangle  \langle Sp, u  + v \rangle  + 
\langle P(u), v \rangle  ( 1 +  \langle A + Sp, u  + v \rangle  ).
\end{multline}
By the definition of ${P(v)^T}$, we have
\begin{multline*}
\langle Mu  ,  {P(v)^T}  \rangle = 
\left \langle M u  , \langle  M^T v, p \rangle \left( ( M^T p +  p' )+ \langle  v, M^T p \rangle v \right) \right \rangle = \\
\langle M^T v, p \rangle \langle M u  , M^T p +  p' \rangle + \langle  Sp, v \rangle \langle M u  , v \rangle
\end{multline*}
and similarly
$$\langle P(u) , M^T v    \rangle = 
\langle M u, p \rangle \langle M p +  p', M^T v   \rangle + \langle Sp, u \rangle \langle  u,M^T v  \rangle.$$

Since %$Mp = Mp'$ and $M^Tp = M^Tp'$, 
%we have $\langle M u  ,  p' \rangle = 0$ % \langle Mu, p \rangle$
%and $\langle p', M^T v   \rangle = 0$, %\langle p, M^T v \rangle$ and hence
by (\ref{seged}) $\langle M^T v, p \rangle \langle M u  ,   p' \rangle + \langle M u, p \rangle  \langle  p', M^T v   \rangle =0$,
we have
\begin{multline*}
\langle M u  ,  {P(v)^T}  \rangle + \langle P(u) , M^T v    \rangle =  \\
\langle  M^T v, p \rangle \langle M u  ,  M^T p  \rangle + \langle M u, p \rangle \langle  M p, M^T v  \rangle + 
 \langle  Sp, v \rangle \langle M u  , v \rangle +\langle Sp, u \rangle \langle  u,M^T v  \rangle = \\ 
 \langle M^T v, p \rangle  \langle M u  , p \rangle (1 +  \langle A, u +p \rangle )+ 
  \langle M u, p \rangle  \langle p, M^T v   \rangle (1 +  \langle A, p +v\rangle)  + \\
   \langle  Sp, v \rangle \langle M u  , v \rangle +\langle Sp, u \rangle \langle  u,M^T v  \rangle = \\
     \langle M u, p \rangle  \langle M^T v , p   \rangle  \langle A, u +v \rangle  +  \langle M u,  v  \rangle \langle Sp, u +v\rangle.
\end{multline*}

Now we have two cases.
\begin{enumerate}
\item
Suppose that $u \neq v$.
Then, we have 
$$
\langle  P(u) , v \rangle = \langle  \langle Mu, p \rangle \left( ( M p +  p' )+ \langle  u, M p \rangle u \right), v \rangle =\langle M u, p \rangle  \langle M p , 
v \rangle,
$$
since $v \neq p'$ and $u \neq v$.
So after putting the above results into  (\ref{masodiklepes}) and simplifying, we have to prove that
\begin{multline*}
\langle M u,  v  \rangle \langle Sp, u +v\rangle+ \langle  P(u) , {P(v)}^T\rangle  = \\
\langle M u,  v  \rangle \langle Sp, u +v\rangle + 
\langle M u, p \rangle  \langle M p , v \rangle (1+   \langle Sp, u +v\rangle),
\end{multline*}
which is equivalent to 
$$
\langle  P(u) , {P(v)}^T\rangle  = 
\langle M u, p \rangle  \langle M p , v \rangle (1+  \langle Sp, u +v\rangle ).
$$
Furthermore, we have
\begin{multline*}
\langle  P(u) ,   {P(v)}^T  \rangle = \\ 
\left \langle  \langle M u, p \rangle \left( ( M p +  p' )+ \langle  u, M p \rangle u \right),
 \langle   M^T v, p \rangle \left( (    M^T p +  p' )+ \langle  v,  M^T p \rangle v \right) \right \rangle = \\
 \langle M u, p \rangle   \langle   M^T v, p \rangle 
 \left( \langle p', p' \rangle +
  %\langle M u, p \rangle  \langle   M^T v, p \rangle 
  \langle M p,  M^T p \rangle + %\\
  %\langle M u, p \rangle  \langle   M^T v, p \rangle
   \langle  v,   M^T p \rangle \langle M p, v \rangle + 
  %\langle M u, p \rangle    \langle   M^T v, p \rangle 
  \langle u, M p \rangle \langle u,  M^T p \rangle \right)  = \\
  \langle M u, p \rangle   \langle   M^T v, p \rangle + 0 +
   \langle M u, p \rangle   \langle   M^T v, p \rangle \left(  \langle   M^T v, p \rangle \langle  v,  M^T p \rangle  +  \langle M u, p \rangle \langle u, M p \rangle \right),
\end{multline*}
where we used (\ref{seged1.5}) and that $\langle M p,  M^T p \rangle = 0$.
But this is equal to 
$$
 \langle M u, p \rangle   \langle   M^T v, p \rangle ( 1 + \langle Sp, u\rangle + \langle Sp,v\rangle).
$$
This finishes the proof for the case $u \neq v$.
\item
Now suppose that $u = v$.% \neq p'$.

Then 
\begin{multline*}
\langle  P(u) , v \rangle =  \langle  P(u) , u \rangle = 
\langle  \langle Mu, p \rangle \left( ( M p +  p' )+ \langle  u, M p \rangle u \right), u \rangle = 2 \langle Mu, p \rangle   \langle Mp, u \rangle = 0.
\end{multline*}
Also
$$\langle M u  ,  {P(v)^T}  \rangle + \langle P(u) , M^T v    \rangle = 
 \langle M u, p \rangle  \langle M^T u , p   \rangle  \langle A, 2u \rangle  +  \langle M u,  u  \rangle \langle Sp, 2u\rangle = 0.$$
So we have to prove that  
$$
\langle  P(u) , {P(u)}^T\rangle  = 0.
$$
For this, we have 
\begin{multline*}
\langle  P(u) , {P(u)}^T\rangle  = \\
\langle  \langle M u, p \rangle \left( ( M p +  p' )+ \langle  u, M p \rangle u \right),
 \langle   M^T u, p \rangle \left( (    M^T p +  p' )+ \langle  u,  M^T p \rangle u \right) \rangle = \\
 2  \langle Mu, p \rangle  \langle Mp, u \rangle + 0 = 0.
\end{multline*}
Which again finishes the proof for the case $u =v \neq p'$.
\end{enumerate}
\end{proof}

Now we show the similar results for the cases when at least one of $u, v$ is equal to $p'$.  
Again suppose $M$ is such that 
\begin{equation}\label{seged2}
\langle M p, p' \rangle = \langle M p', p \rangle = 0.
\end{equation}
The necessary statements are the following.
\begin{lem}\label{masodiklemma}
Let $A \in \Z_2^{r}$ be a fixed vector.
If for any  $x, y \in \mathcal B ( \Z_2^r )$  we have 
$$\langle M x, M^T y \rangle
= \langle M x, y \rangle (1+ \langle A, x + y \rangle),$$
then
\begin{enumerate}[\rm (1)]
\item
we have for all $u \in \mathcal B ( \Z_2^r )$, $u \neq p'$
\begin{enumerate}[\rm (a)]
\item
\begin{multline}\label{masodik}
\langle M u + P(u) , M^T p  \rangle = 
\langle M u + P(u) , p' \rangle \left( 1 +  \langle A + S(M)p + \langle A, p \rangle p', u  + p' \rangle  \right) 
\end{multline}
if $\langle M u  , p' \rangle = 0$ and $\langle A, p' \rangle = 0$, and
\begin{equation}\label{nulla}
\langle M u + P(u) , 0  \rangle = \langle M u + P(u) , p' \rangle \left( 1 +  \langle A + S(M)p + \langle A, p \rangle p', u  + p' \rangle  \right)
\end{equation}
if $\langle M u  , p' \rangle = \langle M u  , p \rangle$,
\item
\begin{multline}\label{masodik2}
\langle M p , M^T u + P^T(u)  \rangle = 
\langle M p , u \rangle  \left( 1 +  \langle A + S(M)p + \langle A, p \rangle p', p'  + u \rangle \right),
\end{multline}
if %$\langle M u  , p' \rangle = 0$ and 
$\langle A, p' \rangle = 0$, and clearly
\begin{equation}\label{nulla2}
\langle 0 , M^T u + P^T(u)  \rangle = 
\langle 0 , u \rangle  \left( 1 +  \langle A + S(M)p + \langle A, p \rangle p', p'  + u \rangle \right),\end{equation}
and
\end{enumerate}
\item
we have 
\begin{equation}\label{masodik3}
\langle M p , M^T p  \rangle = 
\langle M p , p' \rangle  ( 1 +  \langle A + S(M)p + \langle A, p \rangle p', p' + p' \rangle  ).
\end{equation}
and obviously
$
\langle 0 , 0  \rangle = 
\langle 0 , p' \rangle  ( 1 +  \langle A + S(M)p + \langle A, p \rangle p', p' + p' \rangle  ).
$
\end{enumerate}
\end{lem}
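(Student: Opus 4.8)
The plan is to read every displayed identity as one and the same ``master equation''
\[
\langle M'u,\,M'^{T}v\rangle=\langle M'u,\,v\rangle\bigl(1+\langle A',\,u+v\rangle\bigr),
\]
evaluated in the boundary cases where one or both of $u,v$ equals $p'$. Here $A'=A+S(M)p+\langle A,p\rangle p'$, and the quantities appearing in the lemma are exactly the ``columns'' of the post-switch matrix: $M'u=Mu+P(u)$ for $u\neq p'$, together with $M'p'$ taken to be either $Mp$ or $0$ (and correspondingly $M'^{T}p'$ either $M^{T}p$ or $0$). With this dictionary, Lemma~\ref{masodiklemma} is precisely the complement of Lemma~\ref{elsolemma}: the latter settled the generic range $u,v\neq p'$ with the vector $A+S(M)p$, and the only new feature now is the extra summand $\langle A,p\rangle p'$ in $A'$, which is just what is needed to correct the $p'$-coordinate once $p'$ is allowed as an argument. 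So the first step is to record these substitutions and to expand $\langle A',u+p'\rangle$ (and its $p'+u$, $p'+p'$ variants), using $\langle S(M)p,p'\rangle=\langle Mp,p'\rangle\langle Mp',p\rangle=0$ from (\ref{seged2}) and $\langle p',p'\rangle=1$.

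I would dispose of the degenerate identities (\ref{nulla}), (\ref{nulla2}) and the final unnumbered one first, since each has a manifestly vanishing inner product on one side ($\langle\,\cdot\,,0\rangle$ or $\langle 0,\,\cdot\,\rangle$). For (\ref{nulla}) it remains only to see that the right-hand side also vanishes: a one-line computation gives $\langle Mu+P(u),p'\rangle=\langle Mu,p'\rangle+\langle Mu,p\rangle$, because the term $\langle P(u),p'\rangle$ collapses to $\langle Mu,p\rangle$ using $\langle Mp,p'\rangle=0$ and $u\neq p'$; this is $0$ exactly under the stated hypothesis $\langle Mu,p'\rangle=\langle Mu,p\rangle$, so the whole right-hand side dies. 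The other two degenerate lines are immediate.

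The substance is in (\ref{masodik}), (\ref{masodik2}) and (\ref{masodik3}), i.e.\ the cases $v=p'$, $u=p'$ and $u=v=p'$ with the nonzero column $Mp$ / $M^{T}p$. I would expand each side directly over $\Z_2$. On the left, the ``bare'' product $\langle Mu,M^{T}p\rangle$ (resp.\ $\langle Mp,M^{T}u\rangle$, $\langle Mp,M^{T}p\rangle$) is rewritten by the hypothesis $\langle Mx,M^{T}y\rangle=\langle Mx,y\rangle(1+\langle A,x+y\rangle)$; since the diagonal of $M$ is zero, $\langle Mp,M^{T}p\rangle=\langle Mp,p\rangle=0$, which already settles (\ref{masodik3}) against the right-hand side $\langle Mp,p'\rangle=0$. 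For (\ref{masodik}) and (\ref{masodik2}) the remaining terms $\langle P(u),M^{T}p\rangle$ and $\langle Mp,P^{T}(u)\rangle$ are expanded from the definitions of $P,P^{T}$, whereupon the cross terms $\langle Mp,M^{T}p\rangle$, $\langle p',M^{T}p\rangle=\langle Mp',p\rangle$ and $\langle Mp,p'\rangle$ all drop out by the zero diagonal and (\ref{seged2}), leaving a single surviving contribution equal to $\langle Mu,p\rangle\langle Mp,u\rangle=\langle S(M)p,u\rangle$ (resp.\ $\langle S(M)p,u\rangle$ in the transposed computation). After the $\Z_2$ idempotencies $t^{2}=t$ absorb the repeated factors of $\langle Mu,p\rangle$, both sides of (\ref{masodik}) collapse to the common closed form $\langle Mu,p\rangle\bigl(1+\langle A,u+p\rangle+\langle Mp,u\rangle\bigr)$, and (\ref{masodik2}) is its exact transpose-analogue, so both identities hold.

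There is no genuine geometric obstacle here; the difficulty is purely bookkeeping. The two delicate points I would watch are, first, keeping straight which hypothesis is in force in each clause — $\langle Mu,p'\rangle=0$ versus $\langle Mu,p'\rangle=\langle Mu,p\rangle$, and $\langle A,p'\rangle=0$ — since these distinguish the nondegenerate from the degenerate branch; and second, correctly tracking the lone correction term $\langle A,p\rangle p'$ through $\langle A',u+p'\rangle$, as it is the sole discrepancy between this lemma and Lemma~\ref{elsolemma}. Once the idempotency steps are carried out carefully, every line matches term by term, so the expected effort is verification rather than any new idea.
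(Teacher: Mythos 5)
Your proposal is correct and follows essentially the same route as the paper's proof: a direct $\Z_2$ expansion of each identity using $\langle Mp,M^Tp\rangle=0$, the conditions (\ref{seged2}), $\langle A,p'\rangle=0$, $u\neq p'$, and the idempotency $\langle Mu,p\rangle\langle S(M)p,u\rangle=\langle S(M)p,u\rangle$; your observation that $\langle Mu+P(u),p'\rangle=\langle Mu,p'\rangle+\langle Mu,p\rangle$ disposes of (\ref{nulla}) exactly as in the paper, and your common closed form $\langle Mu,p\rangle\bigl(1+\langle A,u+p\rangle+\langle Mp,u\rangle\bigr)$ agrees with the identity the paper reduces (\ref{masodik}) to. The only difference is organizational (you evaluate both sides to a common expression rather than subtracting and verifying the residual identity), which does not change the substance.
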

\begin{proof}
Let $S$ denote $S(M)$ for short.
For (1)(a) let us try to show
\begin{multline*}
\langle M u + \langle M u, p \rangle p'+ 
\langle  M u, p \rangle M p + \langle Sp, u\rangle u, 
 M^T p \rangle = \\
\langle M u + \langle M u, p \rangle p'+ 
\langle  M u, p \rangle M p + \langle Sp, u\rangle u,  p' \rangle 
 ( 1 +  \langle A + Sp + \langle A, p \rangle p', u  + p' \rangle  ).
\end{multline*}
The right hand side is equal to 
$ \langle Mu, p + p' \rangle ( 1 +  \langle A + Sp + \langle A, p \rangle p', u  + p' \rangle  )$ by (\ref{seged2}).
So (\ref{nulla}) is obvious. To show (\ref{masodik}) suppose $\langle M u  , p' \rangle = 0$ and $\langle A, p' \rangle = 0$.
Then we have to prove 
$$
\langle M u, M^T p \rangle + \langle Mu, p \rangle \langle M p , M^T p \rangle + \langle Sp, u \rangle = 
 \langle Mu, p  \rangle ( 1 +  \langle A + Sp + \langle A, p \rangle p', u  + p' \rangle  ),
$$
where we used (\ref{seged2}) and $\langle Sp, u\rangle \langle  u, M^T p \rangle = \langle Sp, u\rangle$.
Since $\langle M p , M^T p \rangle = 0$, this is further equivalent to
$$
 \langle Mu, p \rangle ( 1 + \langle A, u + p \rangle) +  \langle Sp, u \rangle =  \langle Mu, p \rangle ( 1 +  \langle A + Sp+ \langle A, p \rangle p', u  + p' \rangle  )
$$
which holds if $\langle A, p \rangle = \langle A+ \langle A, p \rangle p', p' \rangle$.
Since $\langle A, p' \rangle = 0$, we get the statement.

For (1)(b) since (\ref{nulla2})  obviously holds,  suppose % $\langle M u  , p' \rangle = 0$ and 
$\langle A, p' \rangle = 0$. 
We have to show that
\begin{multline*}
\langle M p, 
 M^T u + \langle  M^T u, p \rangle p'+
\langle M^T u, p \rangle  M^T p + \langle Sp, u \rangle u  \rangle = \\
\langle M p, u \rangle 
 ( 1 +  \langle A + Sp+ \langle A, p \rangle p', p'  + u \rangle  ).
\end{multline*}
This is equivalent to
$$
\langle Mp, M^T u \rangle + \langle Sp, u \rangle = \langle M p, u \rangle 
 ( 1 +  \langle A + Sp+\langle A, p \rangle p', p'  + u \rangle  )
$$
which is further equivalent to
$$
\langle M p, u \rangle ( 1 +  \langle A , p  + u \rangle  )+ \langle Sp, u \rangle = \langle M p, u \rangle 
 ( 1 +  \langle A + Sp+\langle A, p \rangle p', p'  + u \rangle  )
$$
which holds similarly to the previous case.
%if $\langle A, p \rangle = \langle A, p' \rangle$.

Finally (2) holds because both sides are equal to zero.
\end{proof}

Now we are ready to prove  Proposition~\ref{inductionlemma}.
\begin{proof}[Proof of Proposition~\ref{inductionlemma}]
Let $A \in \Z_2^{2k}$ be a fixed vector
such that its $k+i+1, \ldots, 2k$-th coordinates are all zero.  
Assume that for any standard basis elements $x, y \in \Z_2^{2k}$  we have 
$$\langle M_i x, M^T_i y \rangle
= \langle M_{i} x, y \rangle (1+ \langle A, x + y \rangle).$$
In order to prove (1) we want to show that 
 there exists a  vector $B \in  \Z_2^{2k}$ such that
we have for any standard basis elements $u, v \in \Z_2^{2k}$
$$%\begin{multline}\label{masodik}
\langle M_{i+1} u  , M^T_{i+1} v \rangle = 
\langle M_{i+1} u , v \rangle  ( 1 +  \langle B, u  + v \rangle  ).
$$%\end{multline}
For the reader's convenience we recall that
$M_i$ denotes the incidence matrix of the paired tree $T_i$ and
$M_{i+1}$ is obtained from $M_i$ according to the formulas of Proposition~\ref{switchneigh}.

Note that with the roles $M = M_i$, $p = p_{i+1}$ and $p' = q_{i+1}$
$$\langle M^T v, p \rangle \langle M u  ,   p' \rangle + \langle M u, p \rangle  \langle  M^T v , p'  \rangle =0$$ holds
because
$\langle M u  ,   p' \rangle =  \langle  M^T v , p'  \rangle = 0$ for every $u, v \in \mathcal B (\Z_2^{2k})$
since in $M$  the row and column corresponding to $p'$ are entirely zero.
Let $$B = A + S(M_i)p_{i+1} + \langle A, p_{i+1} \rangle q_{i+1}.$$
If $u, v \neq q_{i+1}$, then
$$\langle B, u  + v \rangle = \langle A + S(M_i)p_{i+1} , u  + v \rangle$$ and 
by using Lemma~\ref{elsolemma} 
we get our claim (notice that (\ref{seged1.5}) holds).
And since (\ref{seged2}) holds and also $\langle A, p' \rangle = 0$, if at least one of $u, v$ is equal to $q_{i+1}$, then
by (\ref{masodik}), (\ref{masodik2}) in (1) of Lemma~\ref{masodiklemma} 
and by  (\ref{masodik3}) in (2) of Lemma~\ref{masodiklemma} 
 we get our claim.
 Notice that the $k+i+2, \ldots, 2k$-th coordinates of $B$ are all zero.  

To prove (2), we proceed similarly. Take $M = M_i$, $p = p_{i}$ and $p' = q_{i}$.
In this case 
$$\langle M^T v, p \rangle \langle M u  ,   p' \rangle + \langle M u, p \rangle  \langle  M^T v , p'  \rangle =0$$ holds
because $\langle M u  ,   p' \rangle =  \langle M u  ,   p \rangle$ 
and
$\langle  M^T v , p'  \rangle = \langle  M^T v , p  \rangle$
for every $u, v \in \mathcal B (\Z_2^{2k})$
since the rows and columns in $M$, respectively, corresponding to $p$ and $p'$ are the same. 

Let $$C = A + S(M_i)p_{i} + \langle A, p_{i} \rangle q_{i}.$$
If $u, v \neq q_{i}$, then
$$\langle C, u  + v \rangle = \langle A + S(M_i)p_{i} , u  + v \rangle$$ and 
by using Lemma~\ref{elsolemma}  again
we get our claim.
And if at least one of $u, v$ is equal to $q_i$, then by (\ref{nulla}), (\ref{nulla2}) and (2) of Lemma~\ref{masodiklemma} 
we get our claim.
\end{proof}

\end{spacing}
\end{document}